\documentclass[11pt]{article}
\usepackage{mathrsfs}
\usepackage{amsmath,amsthm,amssymb}
\usepackage{amsfonts}
\usepackage{color,xcolor}
\usepackage{graphicx}
\usepackage{geometry}
\usepackage{manfnt}
\usepackage{booktabs}
\usepackage{siunitx}
\usepackage{multirow}
\usepackage{subcaption}
\usepackage{float}    % Ìá¹© [H]
\usepackage[section]{placeins}
\usepackage[pagewise]{lineno}%\linenumbers
\usepackage[colorlinks,citecolor=blue,urlcolor=blue]{hyperref}
\allowbreak
\newtheorem{theorem}{Theorem}[section]

\newtheorem{lemma}[theorem]{Lemma}

\newtheorem{definition}[theorem]{Definition}
\newtheorem{remark}[theorem]{Remark}
\newtheorem{example}[theorem]{Example}

\def\cC{{\mathcal C}}
\def\cE{{\mathcal E}}
\def\cK{{\mathcal K}}

\def\geq{\geqslant}

\begin{document}
\title{ \bf Strong solutions and sharp Euler--Maruyama approximations for SDEs with Lebesgue--Dini drift}
\author{Jinlong Wei$^a$, Junhao Hu$^b$, Guangying Lv$^c$ and Chenggui Yuan$^d$\thanks{Corresponding author.}
\\{\small \it $^a$School of Statistics and Mathematics, Zhongnan University of Economics}\\
{\small \it  and Law, Wuhan 430073, China}  \\ {\small \tt  weijinlong.hust@gmail.com}
\\ {\small \it $^b$School of Mathematics and Statistics, South-Central Minzu University} \\ {\small \it Wuhan 430074, China} \\ {\small \tt junhaohu74@163.com}
\\ {\small \it $^c$College of Mathematics and Statistics,
Nanjing University of Information}
\\{\small \it  Science and Technology, Nanjing 210044, China}\\ {\small \tt  gylvmaths@126.com}
\\ {\small \it $^d$Department of Mathematics, Swansea University, Bay Campus} \\ {\small \it Swansea SA1 8EN, United Kingdom} \\ {\small \tt C.Yuan@swansea.ac.uk}
}

\date{}

\maketitle

\noindent{\hrulefill}
\vskip1mm\noindent{\bf Abstract} We investigate the strong approximation of stochastic differential equations whose drift is square-integrable in time and Dini continuous in space, while the diffusion coefficient is non-constant and uniformly elliptic. Using a refined It\^{o}--Tanaka trick combined with parabolic regularity estimates, we first establish strong well-posedness and the stochastic flow property.
Under additional Lipschitz regularity of the diffusion matrix, we then analyze a polygonal-type Euler--Maruyama scheme and prove the strong error estimate
\[
\Big\|\sup_{0\le t\le1}|X_t-X_t^n|\Big\|_{L^p(\Omega)}
\le C
n^{-\frac12}\log(n)^{\frac32}, \quad p\ge2.
\]
We further show that this rate is sharp: even under smooth and uniformly elliptic diffusion coefficients with vanishing drift, the convergence order $1/2$ cannot be improved. These results provide the first sharp quantitative strong convergence estimates in a Lebesgue--Dini drift framework.

\vskip1mm\noindent
{\bf Keywords:}  Lebesgue--Dini drift; Euler--Maruyama approximation;  Stochastic sewing; It\^{o}--Tanaka trick

\vskip1mm\noindent
{\bf MSC (2020):} Primary 65C30; Secondary 60H10, 35K10.

 \vskip0mm\noindent{\hrulefill}

\section{Introduction}\label{sec1}\setcounter{equation}{0}
We consider the stochastic differential equation (SDE) in $\mathbb{R}^d$
\begin{equation}\label{1.1}
dX_{s,t}(x)=b(t,X_{s,t}(x))dt+\sigma(t,X_{s,t}(x))dW_t,\quad t\in (s,1], \qquad
X_{s,t}(x)\big|_{t=s}=x\in\mathbb{R}^d,
\end{equation}
where $s\in [0,1)$,
$\{W_t\}_{0\le t\le 1}=\{(W_{1,t},\ldots,W_{d,t})^\top\}_{0\le t\le 1}$
is a $d$-dimensional standard Wiener process defined on a stochastic basis
$(\Omega,\mathcal{F},\mathbb{P},\{\mathcal{F}_t\}_{0\le t\le 1})$.
The coefficients $b:[0,1]\times\mathbb{R}^d\to\mathbb{R}^d$ and
$\sigma:[0,1]\times\mathbb{R}^d\to\mathbb{R}^{d\times d}$ are Borel measurable.

\smallskip
The classical theory ensures strong existence and pathwise uniqueness for \eqref{1.1}
under Lipschitz assumptions on the coefficients; see It\^{o}~\cite{Ito}.
A substantial body of work has subsequently shown that the Wiener process can
regularize the dynamics and allow for strong well-posedness far beyond the
Lipschitz regime. In particular, bounded measurable drifts were treated by
Veretennikov~\cite{Ver}, and further refinements and variants can be found in,
e.g., \cite{MNP,WLW}. A major step was made by Krylov and R\"ockner~\cite{KR},
who proved strong well-posedness in the integrable (subcritical) LPS class for
$\sigma=I_{d\times d}$:
\begin{equation}\label{1.2}
b\in L^q([0,1];L^p(\mathbb{R}^d;\mathbb{R}^d)),\quad p,q\in[2,\infty),\qquad
\tfrac{2}{q}+\tfrac{d}{p}<1.
\end{equation}
We refer to Zhang~\cite{Zha05,Zha11}, Zhang and Yuan~\cite{ZY}, Xia, Xie, Zhang and Zhao~\cite{XX},
and \cite{GG,WHY1} for related developments, including extensions to non-constant diffusion coefficients.

\smallskip
Motivated in part by the Navier--Stokes equations, it is natural to ask whether
strong well-posedness persists at the critical threshold where equality holds in \eqref{1.2};
see \cite{CI,Lady}. This critical problem was open for a long time after \cite{KR}.
Recently, R\"ockner and Zhao~\cite[Theorem~1.1]{RZ2} established strong well-posedness for $d\ge 3$
(with $\sigma=I_{d\times d}$) under the critical LPS condition
\begin{equation*}
b\in L^q([0,1];L^p(\mathbb{R}^d;\mathbb{R}^d)),\quad p,q\in(2,\infty),\qquad
\tfrac{2}{q}+\tfrac{d}{p}=1,
\end{equation*}
or under the continuity-in-time assumption
\begin{equation*}
b\in \mathcal{C}([0,1];L^d(\mathbb{R}^d;\mathbb{R}^d)).
\end{equation*}
Further contributions and complementary approaches can be found in
\cite{Kry21-2,Kry21-3,Kry23-2,Nam,WLWu,WHY2}.

\smallskip
The borderline case $q=2$, $p=\infty$ is particularly subtle.
For $\sigma=I_{d\times d}$, Beck, Flandoli, Gubinelli and Maurelli~\cite{BFGM} obtained
strong existence and pathwise uniqueness for a.e.\ starting point $x\in\mathbb{R}^d$;
see also \cite{KSS} for form-bounded drifts.
Wei, Wang, Lv and Duan~\cite{WWLD} later upgraded this to every $x\in\mathbb{R}^d$
under an additional local Dini continuity assumption on $b$.
More recently, Krylov~\cite{Kry25} proved strong uniqueness for Morrey drifts and VMO diffusion
coefficients, covering in particular the case $q=2$ and $p=\infty$,
while the strong existence problem has remained open in general.
One of the aims of the present paper is to provide a partially positive answer in this direction
and to show that \eqref{1.1} generates a unique stochastic flow of homeomorphisms.

\smallskip
The second part of this paper is devoted to quantitative approximation of \eqref{1.1}.
While Euler--Maruyama-type schemes are classical and well understood for smooth coefficients,
their analysis in low-regularity regimes involves several structural difficulties.
These difficulties stem not only from the irregularity of the drift, but also from
the interaction between temporal discretization and stochastic integration,
especially in the presence of a non-constant diffusion coefficient. Our main objective is to obtain strong $L^p$-error bounds with explicit rates in a borderline regime where well-posedness and numerical analysis meet.

\smallskip
We focus on two closely related discretization schemes for \eqref{1.1} (with $s=0$),
namely the Euler--Maruyama scheme
\begin{equation}\label{1.3}
	d\tilde{X}_t^n
	= b\big(\kappa_n(t),\tilde{X}_{\kappa_n(t)}^n\big)dt
	+ \sigma\big(\kappa_n(t),\tilde{X}_{\kappa_n(t)}^n\big)dW_t,
	\quad t\in(0,1], \quad \tilde{X}_0^n = x\in\mathbb{R}^d,
\end{equation}
and the polygonal-type approximation
\begin{equation}\label{1.4}
	dX_t^n
	= b\big(t,X_{\kappa_n(t)}^n\big)dt
	+ \sigma\big(t,X_{\kappa_n(t)}^n\big)dW_t,
	\quad t\in(0,1], \quad X_0^n = x\in\mathbb{R}^d,
\end{equation}
where $n\in\mathbb{N}$ and $\kappa_n(t)=\lfloor nt\rfloor/n$.
The key difference between the two schemes lies in the treatment of the drift and diffusion:
the polygonal scheme avoids evaluating $b$ and $\sigma$ at the discretized time argument,
which is crucial when $b$ and $\sigma$ have limited temporal regularity, since it avoids evaluating the coefficients at discontinuous time arguments and yields better compatibility with stochastic integration.

\smallskip
At the level of classical regularity, strong convergence properties of
Euler--Maruyama schemes are by now well understood.
If both $b$ and $\sigma$ are Lipschitz continuous in space and
$1/2$-H\"older continuous in time, then the strong $L^p$ convergence rate
is known to be $1/2$; see \cite{KP} for $d\ge1$ and \cite{Yan} for $d=1$.
Subsequent works relaxed these assumptions by weakening either the spatial
or the temporal regularity of the drift, while still retaining quantitative
rates; see, for instance, \cite{BHY,GR,NT1,Pamen}.
However, all these results rely on a degree of continuity that excludes
several drift classes arising naturally in the theory of singular SDEs.

\smallskip
A further difficulty arises when the drift is discontinuous in space
and/or time.
In such cases, the standard Euler--Maruyama scheme \eqref{1.3} may even
become ill-defined when the numerical trajectory approaches a singularity
of $b$, potentially leading to severe instability.
Strong convergence for multidimensional SDEs with discontinuous drift
and possibly degenerate diffusion coefficients has been established in
\cite{LS}, where a rate of order $1/4-\varepsilon$ for arbitrarily small $\varepsilon>0$ is obtained under piecewise
Lipschitz and geometric non-parallelity conditions.
This line of work highlights the intrinsic challenges posed by spatial
irregularities of the drift, even in time-independent settings.
These difficulties have motivated the development of various taming or
truncation procedures designed to restore numerical stability, among which
the tamed Euler--Maruyama method \cite{LL} has emerged as a widely used and
effective approach.

\smallskip
Beyond strong $L^p$-error estimates and stability considerations, a natural
question concerns convergence in distributional sense, such as total
variation distance or related notions.
Results in this direction are available, for example, for time-independent
drifts under integrability or H\"older-type assumptions \cite{SYZ}, and for
bounded, time-dependent coefficients \cite{BJ}.
Within the Krylov--R\"ockner framework, Jourdain and Menozzi \cite{JS}
established quantitative bounds on the marginal densities of tamed
Euler--Maruyama schemes.
More recently, L\^{e} and Ling \cite{LL} employed stochastic sewing
techniques to improve strong convergence rates under additional spatial
regularity assumptions on the diffusion coefficient.
Taken together, these works demonstrate the effectiveness of stochastic
sewing methods in the numerical analysis of singular SDEs, while leaving
several borderline regularity regimes unresolved.

\smallskip
In particular, strong error estimates remain largely open when the drift
is in $L^2([0,1];L^\infty(\mathbb{R}^d;\mathbb{R}^d))$, especially in the
presence of general, non-constant diffusion coefficients.
This regime lies at the interface between critical well-posedness theory
and numerical approximation, and cannot be adequately treated by existing
approaches relying solely on regularization or taming techniques.

\smallskip
The contribution of this paper is twofold.
First, using the It\^{o}--Tanaka trick, we establish strong well-posedness
and the stochastic flow property for \eqref{1.1} when the drift is
square-integrable in time, bounded and Dini continuous in space.
Second, under an additional Lipschitz assumption on the diffusion matrix, we prove a sharp strong convergence bound for the polygonal-type Euler--Maruyama scheme of order $n^{-1/2}(\log n)^{3/2}$.

\subsection{Setup and notation}\label{sec1.1}
We introduce the main notation and conventions adopted throughout the paper.

$\bullet$ $\mathbb{N}$ denotes the set of natural numbers, and $\mathbb{R}_+$ denotes the set of positive real numbers.
The symbol $\nabla$ stands for the gradient with respect to the spatial variables.

$\bullet$ For a $d\times d$ matrix $a=(a_{ij})_{1\le i,j\le d}$, the symbol $a_{ij}$ denotes its $(i,j)$-th entry, namely the element in the $i$-th row and $j$-th column.
For a vector $z=(z_1,\ldots,z_d)\in\mathbb{R}^d$, the symbol $z_i$ denotes its $i$-th component.

$\bullet$ A continuous function $\rho:[0,1]\to\mathbb{R}_+$ is said to be
slowly varying at zero (in the sense of Karamata~\cite[p.~6]{BGT}) if, for all $\upsilon>0$,
\[
\lim_{r\to 0}\frac{\rho(\upsilon r)}{\rho(r)}=1.
\]

$\bullet$ A continuous and increasing function $\rho:[0,1]\to\mathbb{R}_+$ is called a
Dini function if
\[
\int_0^1\frac{\rho(r)}{r}dr<\infty.
\]
Let $h:\mathbb{R}^d\to\mathbb{R}$ be a continuous function.
If there exists a constant $C>0$ such that
\[
|h(x)-h(y)|\le C\rho(|x-y|), \quad x,y\in\mathbb{R}^d,\ \ |x-y|\le1,
\]
then $h$ is said to be Dini continuous. If, in addition, $h$ is bounded, then we call it a bounded Dini function.
The space of all bounded Dini functions is denoted by $\mathcal{D}_b^\rho(\mathbb{R}^d)$.
For $h\in\mathcal{D}_b^\rho(\mathbb{R}^d)$, we define the norm
\[
\|h\|_{\mathcal{D}_b^\rho(\mathbb{R}^d)}
=\sup_{x\in\mathbb{R}^d}|h(x)|
+\sup_{0<|x-y|\le1}\frac{|h(x)-h(y)|}{\rho(|x-y|)}
=: \|h\|_0+[h]_\rho =: \|h\|_\rho,
\]
under which $\mathcal{D}_b^\rho(\mathbb{R}^d)$ is a Banach space.

$\bullet$ We denote by $L^2([0,1];\mathcal{D}_b^\rho(\mathbb{R}^d))$ (the bounded Lebesgue--Dini space)
the set of all Borel measurable functions
$h\in L^2([0,1];\mathcal{C}_b(\mathbb{R}^d))$
such that
\[
|h(t,x)-h(t,y)|\le f(t)\rho(|x-y|),
\quad x,y\in\mathbb{R}^d,\ \ |x-y|\le1,
\]
for some function $f\in L^2([0,1])$.
For $h\in L^2([0,1];\mathcal{D}_b^\rho(\mathbb{R}^d))$, we define the norm
\[
\|h\|_{L^2([0,1];\mathcal{D}_b^\rho(\mathbb{R}^d))}
=\bigg(\int_0^1\|h(t,\cdot)\|_\rho^2dt\bigg)^{\frac12}
=:\|h\|_{2,\rho}.
\]
Then $L^2([0,1];\mathcal{D}_b^\rho(\mathbb{R}^d))$ is a Banach space.
For simplicity, we write $\|h(t,\cdot)\|_\rho$ as $\|h(t)\|_\rho$ throughout the paper. Moreover, for a vector-valued function
$h=(h_1,\ldots,h_d):[0,1]\times\mathbb{R}^d\to\mathbb{R}^d$
with $h_j\in L^2([0,1];\mathcal{D}_b^\rho(\mathbb{R}^d))$ for $1\le j\le d$,
we say that $h\in L^2([0,1];\mathcal{D}_b^\rho(\mathbb{R}^d;\mathbb{R}^d))$ and define
\[
\|h\|_{2,\rho}
:=\bigg(\sum_{j=1}^d\|h_j\|_{2,\rho}^2\bigg)^{\frac12}.
\]

$\bullet$ Given a filtered probability space
$(\Omega,\mathcal{F},\{\mathcal{F}_t\}_{0\le t\le 1},\mathbb{P})$,
we always assume that the filtration satisfies the usual conditions;
in particular, $\mathcal{F}_0$ is complete.
For conditional expectations, we write
$\mathbb{E}_t Y := \mathbb{E}[Y|\mathcal{F}_t]$ and define
\[
\|Y\|_{L^p(\Omega|\mathcal{F}_t)}
:=\big(\mathbb{E}[|Y|^p|\mathcal{F}_t]\big)^{\frac1p}, \quad p\ge1.
\]

$\bullet$ Let $g\in L^p([0,1];L^\infty(\mathbb{R}^d))$
or $f\in L^p([0,1];\mathcal{C}_b(\mathbb{R}^d))$ with $p\in[2,\infty]$
and $0\le s\le t\le1$.
We use the notation
$\|g\|_{p,\infty,[s,t]}:=\|g\|_{L^p([s,t];L^\infty(\mathbb{R}^d))}$
and
$\|f\|_{p,0,[s,t]}:=\|f\|_{L^p([s,t];\mathcal{C}_b(\mathbb{R}^d))}$.
When $s=0$ and $t=1$, we simply write
$\|g\|_{p,\infty}$ and $\|f\|_{p,0}$. Similarly, for every $t\in [0,1]$, we use the notation $\|g(t)\|_\infty:=\|g(t,\cdot)\|_\infty$.

$\bullet$ For $0\le S\le T\le1$, define the simplices
\[
[S,T]_\le^2=\{(s,t):S\le s\le t\le T\},\qquad
[S,T]_\le^3=\{(s,u,t):S\le s\le u\le t\le T\}.
\]
Given a map $A:[S,T]_\le^2\to\mathbb{R}^d$, we define its increment
$\delta A:[S,T]_\le^3\to\mathbb{R}^d$ by
\[
\delta A_{s,u,t}:=A_{s,t}-A_{s,u}-A_{u,t}.
\]

$\bullet$ A measurable function $w:[0,1]_\le^2\to\mathbb{R}_+$ is called a
control if it is superadditive, that is,
\[
w(s,u)+w(u,t)\le w(s,t),\qquad (s,u,t)\in[0,1]_\le^3.
\]
If $w_1$ and $w_2$ are controls and $\theta_1,\theta_2\in\mathbb{R}_+$ satisfy
$\theta_1+\theta_2\ge1$, then $w:=w_1^{\theta_1}w_2^{\theta_2}$ is also a control.

$\bullet$ Throughout the paper, the letter $C$ denotes a generic positive constant
whose value may change from line to line.
For a parameter or function $\tilde{\zeta}$, the notation $C(\tilde{\zeta})$
indicates that the constant depends only on $\tilde{\zeta}$.
When no confusion arises, we simply write $C$.

\subsection{Main results}\label{sec1.2}

In this subsection, we present our main results.
The first one concerns the strong well-posedness.
Before giving the result, we introduce the following definition.

\begin{definition}[\cite{Kun90}, p.~114]\label{def1.1}
A stochastic flow of homeomorphisms on a stochastic basis
$(\Omega,\mathcal{F},\mathbb{P},$ $\{\mathcal{F}_t\}_{t\in[0,1]})$
associated with \eqref{1.1}
is a map
\[
(s,t,x,\omega)\mapsto X_{s,t}(x,\omega),
\quad 0\le s\le t\le1,\; x\in\mathbb{R}^d,\; \omega\in\Omega,
\]
with values in $\mathbb{R}^d$, such that:

\smallskip
\noindent (i)
For each $s\in[0,1]$ and $x\in\mathbb{R}^d$, the process
$\{X_{s,t}(x)\}_{s\le t\le 1}$ is a continuous
$\{\mathcal{F}_{s,t}\}_{s\le t\le1}$-adapted solution of \eqref{1.1}.

\smallskip
\noindent (ii)
$\mathbb{P}$-a.s., $X_{s,t}(\cdot)$ is a homeomorphism for all $0\le s\le t\le1$,
and both $X_{s,t}(x)$ and its inverse $X_{s,t}^{-1}(x)$ are continuous in $(s,t,x)$.

\smallskip
\noindent (iii)
$\mathbb{P}$-a.s., the flow property holds:
$X_{s,t}(x)=X_{r,t}(X_{s,r}(x))$
for all $0\le s\le r\le t\le1$ and $x\in\mathbb{R}^d$,
and $X_{s,s}(x)=x$.
\end{definition}

Our first main result is the following.
\begin{theorem}\label{the1.2}
Let
$b\in L^2([0,1];\mathcal{D}_b^\rho(\mathbb{R}^d;\mathbb{R}^d))$
such that $\rho$ is slowly varying at zero and $\rho^{1/2}$ is a Dini function.
Suppose that $\sigma\in L^2([0,1];W^{1,\infty}(\mathbb{R}^d;\mathbb{R}^{d\times d}))$
and that $a(t,x)=\sigma(t,x)\sigma^{T}(t,x)$ satisfies:

\smallskip
$\bullet$
There exists a constant $\Gamma>1$ such that
\begin{equation}\label{1.5}
\Gamma^{-1}|\xi|^2
\le \sum_{i,j=1}^d a_{i,j}(t,x)\xi_i\xi_j
\le \Gamma|\xi|^2,
\quad \forall \ (t,x)\in[0,1]\times \mathbb{R}^d,\ \xi\in\mathbb{R}^d.
\end{equation}

$\bullet$
There exists $\alpha\in(0,1]$ such that
\begin{equation}\label{1.6}
|a_{i,j}(t,x)-a_{i,j}(t,y)|\le C|x-y|^\alpha,
\quad \forall\ t\in[0,1], \ \ 1\le i,j\le d.
\end{equation}
Then there is a unique stochastic flow of homeomorphisms
$\{X_{s,t}(\cdot)\}_{0\le s\le t\le 1}$ associated with \eqref{1.1}.
\end{theorem}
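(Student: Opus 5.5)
We use the Itô--Tanaka (Zvonkin) transformation. For $\lambda>0$ large and each component $l$, consider the backward parabolic system
\begin{equation*}
\partial_t u+\tfrac12\sum_{i,j}a_{ij}\partial_{ij}u+b\cdot\nabla u-\lambda u=-b,\quad t\in[0,1),\qquad u(1,\cdot)=0 .
\end{equation*}
\textbf{Step 1: parabolic estimates for $u$.} We first show that this system has a unique solution with
\[
\|\nabla u\|_{L^\infty([0,1]\times\mathbb{R}^d)}\le \tfrac12
\]
for $\lambda$ large, that $\nabla^2 u$ exists, and that
\[
\|\nabla^2 u(t)\|_\infty\le \int_t^1 K(t,r)\,\|b(r)\|_\rho\,dr ,
\]
with a kernel $K$ whose singularity is integrable in $L^2$ in time.

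The key input is the Dini continuity. Freeze the coefficients $a(t,x_0)$ and write $u$ via Duhamel with the Gaussian kernel $p$ of the frozen operator. The second derivative then contains
\[
\int_t^1 e^{-\lambda(r-t)}\int\nabla^2 p(t,r,x-y)\,[b(r,y)-b(r,x)]\,dy\,dr ,
\]
which is bounded by
\[
\int_t^1 \|b(r)\|_\rho\,\frac{\rho(\sqrt{r-t})}{r-t}\,dr .
\]
Because $\rho^{1/2}$ is Dini, $r\mapsto \rho(\sqrt r)/r$ lies in a space compatible with Young's inequality against $L^2$, with a small bound on short intervals. The slowly varying property is used to compare $\rho(c\sqrt r)$ with $\rho(\sqrt r)$ uniformly in Gaussian moments. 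The Hölder perturbation (1.6) of $a$ is handled by the standard parametrix/Schauder freezing argument, and $b\cdot\nabla u$ by a fixed point on short intervals, iterated finitely many times. We then obtain $\nabla^2u\in L^2([0,1];L^\infty)$, and moreover $\nabla^2 u(t)$ is Dini continuous with modulus controlled by $\rho^{1/2}$.

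\textbf{Step 2: transformation.} Set $\Phi(t,x)=x+u(t,x)$. Since $|\nabla u|\le \frac12$, $\Phi(t,\cdot)$ is a $C^1$-diffeomorphism with Lipschitz inverse, uniformly in $t$. By Itô's formula (justified by mollifying $b$ and $\sigma$ and passing to the limit using Step 1), $X$ solves (1.1) iff $Y_t=\Phi(t,X_t)$ solves
\[
dY=\tilde b(t,Y)\,dt+\tilde\sigma(t,Y)\,dW ,
\]
where
\[
\tilde b=\lambda u\circ\Phi^{-1},\qquad \tilde\sigma=\big((I+\nabla u)\sigma\big)\circ\Phi^{-1} .
\]
Here $\tilde b$ is Lipschitz in space, and $\tilde\sigma$ is such that $|\tilde\sigma(t,x)-\tilde\sigma(t,y)|^2\le g(t)\,\varrho(|x-y|^2)$. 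The time weight $g$ lies in $L^1$ because $\sigma\in L^2W^{1,\infty}$ and $\|\nabla^2u(t)\|_\infty^2$ is integrable. The modulus $\varrho$ is of Osgood type, or Lipschitz if we prove $\nabla^2u$ bounded in $x$.

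\textbf{Step 3: well-posedness and flow.} For the transformed equation we proceed as follows:
\begin{itemize}
\item Pathwise uniqueness follows by Itô on $|Y-Y'|^2$ (or $|Y-Y'|^{p}$), BDG, and Gronwall/Bihari with the $L^1$ time weight.
\item Weak existence holds by tightness of the mollified solutions, since the coefficients are bounded/elliptic after localization.
\item Yamada--Watanabe then gives strong existence.
\item For the flow, we prove the moment estimates
\[
\mathbb{E}\sup_t|Y_{s,t}(x)-Y_{s',t}(x')|^p\le C\big(|x-x'|^p+|s-s'|^{p/2}\big)
\]
and, for negative moments, $\mathbb{E}|Y_{s,t}(x)-Y_{s,t}(x')|^{-p}\le C|x-x'|^{-p}$, together with the growth bound $\mathbb{E}(1+|Y_{s,t}(x)|^2)^{-p}$.
\end{itemize}
Kunita's criteria (Kolmogorov plus injectivity/surjectivity via the compactification argument of Kunita [Kun90]) then yield a flow of homeomorphisms for $Y$. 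Composing with $\Phi^{-1}(t,\cdot)$, which is jointly continuous, transfers this to $X_{s,t}=\Phi^{-1}(t,\cdot)\circ Y_{s,t}\circ\Phi(s,\cdot)$; the flow property follows from uniqueness. The $L^2$-in-time weights only affect constants via the Gronwall step.

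\textbf{Main obstacle.} The critical step is Step 1. We must get $\nabla^2 u$ with $L^2$-in-time integrability, and enough spatial continuity that $\tilde\sigma$ admits the square-modulus bound needed for uniqueness and negative moments, using only $b\in L^2_t$ Dini. The time integrability is exactly critical ($q=2$, $p=\infty$), so no power gain is available; smallness must come from the Dini integral $\int_0^\delta \rho^{1/2}(r)/r\,dr\to0$. The Lipschitz bound on $\nabla u$ will be obtained from $\int_t^1 (r-t)^{-1/2}\|b(r)\|_0\,dr\lesssim \lambda^{-1/4}\|b\|_{2,0}$.
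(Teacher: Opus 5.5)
Your overall architecture matches the paper's. You use the Zvonkin transform with $\lambda$ large, $\|\nabla u\|_\infty\le\frac12$ and $\nabla^2u\in L^2([0,1];L^\infty)$, then a transformed SDE with $\tilde b$ Lipschitz and $\tilde\sigma$ Lipschitz with an $L^2$ time weight, then Kunita's moment criteria. Because $\tilde\sigma$ is already Lipschitz with an $L^2$ time weight, the Osgood/Yamada--Watanabe detour and the claimed Dini continuity of $\nabla^2u(t)$ are not needed.

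The gap is in Step 1, exactly at the critical exponent. First, your gradient bound $\int_t^1(r-t)^{-1/2}\|b(r)\|_0\,dr\lesssim\lambda^{-1/4}\|b\|_{2,0}$ is false. Even with the weight $e^{-\lambda(r-t)}$, Cauchy--Schwarz would need $\int_0^1 e^{-2\lambda s}s^{-1}ds<\infty$, and this integral diverges. For a forcing merely in $L^2_tL^\infty_x$ the gradient need not be bounded at all, which contradicts your own remark that no power gain is available. The $L^\infty$ bound on $\nabla u$ must itself use the cancellation $\int\nabla_x\cK\,dy=0$ together with the Dini modulus. This gives the kernel $\rho(\sqrt s)s^{-1/2}\in L^2$, and smallness comes from $\big(\int_0^1e^{-2\lambda s}\rho(\sqrt s)^2s^{-1}ds\big)^{1/2}\to0$ by dominated convergence, not from a power of $\lambda$.

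Second, and more seriously, treating $b\cdot\nabla u$ as a forcing in a short-interval fixed point does not close. Every cancellation estimate needs $[b(r)\cdot\nabla u(r)]_\rho$, and this contains the term $\|b(r)\|_0[\nabla u(r)]_\rho$.
\begin{itemize}
\item If you bound $[\nabla u(r)]_\rho\lesssim\|\nabla^2u(r)\|_\infty$, the forcing modulus is only $L^1$ in time. Convolving it with $\rho(\sqrt s)/s\in L^1$, or with the unbounded $\rho(\sqrt s)s^{-1/2}$, gives neither $\nabla^2u\in L^2L^\infty$ nor $\nabla u\in L^\infty$.
\item If you instead try to propagate a uniform Dini modulus for $\nabla u$, the range $0\le r-t\le\delta^2$ only returns a modulus of order $\big(\int_0^\delta\rho(\tau)^2\tau^{-1}d\tau\big)^{1/2}$. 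For slowly varying $\rho$ this is not $O(\rho(\delta))$, so the modulus degrades under iteration.
\end{itemize}
Shrinking the interval only makes $\|b\|_{L^2(I)}$ small; it does not repair this scaling loss.

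The missing idea is the paper's shift \eqref{2.9}--\eqref{2.13}. For each base point, solve $\dot\varphi_t=-b^m(t,\varphi_t)$ and pass to $\hat u(t,x)=u(t,x+\varphi_t)$. The new drift $b^m(t,x+\varphi_t)-b^m(t,\varphi_t)$ vanishes at $x=0$. It therefore contributes only $\|\nabla u\|_\infty\|b(r)\|_\rho\rho(|y|)$, so no modulus of $\nabla u$ is ever required. The remaining pieces then fall into place:
\begin{itemize}
\item The bound $|\varphi_t-\varphi_r|\le(t-r)^{1/2}\|b\|_{2,0}$ lets the shift be absorbed into the Gaussian.
\item Slow variation gives $\rho(c\sqrt s)\lesssim\rho(\sqrt s)$.
\item The gradient bound closes for large $\lambda$.
\item The Hessian bound follows from the kernel $\rho(\sqrt s)/s\in L^1$ and Young's inequality.
\end{itemize}
Freezing $b(r,x_0)$ together with $a(r,x_0)$ in your model operator would likely achieve the same effect, since the induced Gaussian shift is again of size $(r-t)^{1/2}\|b\|_{2,0}$. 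With Step 1 repaired in this way, your Steps 2--3 go through.
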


We present an explicit example illustrating the scope of Theorem~\ref{the1.2}.
The drift satisfies a Lebesgue--Dini spatial regularity, strictly weaker than any
H\"older continuity, while the diffusion coefficient is non-constant, uniformly elliptic,
and spatially H\"older continuous.

\begin{example}\label{ex1.3}
Fix $\beta>2$ and define
\[
\rho(r):=\bigl(\log(e/r)\bigr)^{-\beta},\qquad r\in(0,1].
\]
Then $\rho$ is slowly varying at zero and $\rho^{1/2}$ is a Dini function. Let $\varphi:\mathbb{R}\to[0,1/2]$ be the $1$-periodic sawtooth function
\[
\varphi(\upsilon):=\min\bigl\{\mathrm{dist}(\upsilon,\mathbb{Z}),1-\mathrm{dist}(\upsilon,\mathbb{Z})\bigr\},
\qquad
\mathrm{dist}(\upsilon,\mathbb{Z})=\inf_{m\in\mathbb{Z}}|\upsilon-m|,\]
which is bounded and $1$-Lipschitz.
Define
\[
a_k:=\rho(2^{-k})-\rho(2^{-(k+1)})\ge0,
\qquad
g(x):=\sum_{k=1}^{\infty} a_k\varphi(2^k x).
\]
For any $f\in L^2([0,1])$, set
\[
b(t,x):=f(t)(g(x_1),0,\ldots,0),\qquad (t,x)\in[0,1]\times\mathbb{R}^d.
\]
Then the drift satisfies $b\in L^2\bigl([0,1];\mathcal{D}_b^\rho(\mathbb{R}^d;\mathbb{R}^d)\bigr)$,
while its spatial regularity is strictly weaker than any H\"older continuity, namely
\[
b(t,\cdot)\notin \mathcal{C}^\alpha_b(\mathbb{R}^d)
\quad\text{for all }\alpha\in(0,1).
\]

Let $\varepsilon\in(0,1/2)$ and define
\[
\sigma(t,x)
:= I_{d\times d}
+\varepsilon \sin(2\pi t)
\mathrm{diag}\bigl(\tanh(x_1),\ldots,\tanh(x_d)\bigr).
\]
Then $\sigma\in L^\infty\bigl([0,1];W^{1,\infty}(\mathbb{R}^d;\mathbb{R}^{d\times d})\bigr)$.
Moreover, the diffusion matrix $a=\sigma\sigma^T$ is uniformly elliptic:
\[
(1-\varepsilon)^2|\xi|^2
\le \xi^T a(t,x)\xi
\le (1+\varepsilon)^2|\xi|^2,
\qquad \forall \ (t,x)\in[0,1]\times\mathbb{R}^d,\ \xi\in\mathbb{R}^d.
\]
Consequently, all assumptions of Theorem~\ref{the1.2} are satisfied and there exists a unique stochastic flow of homeomorphisms for this SDE.
\end{example}

Based on the strong well-posedness established in Theorem \ref{the1.2}, we obtain the following convergence rate under additional regularity assumptions on the diffusion coefficient.
\begin{theorem}\label{the1.4}
Let $b$ be as in Theorem~\ref{the1.2}.
Assume that $a=\sigma\sigma^T$ satisfies \eqref{1.5}, and \eqref{1.6} with $\alpha=1$.
Let $X_t$ and $X_t^n$ be the unique strong solutions of \eqref{1.1} and \eqref{1.4},
respectively. Then, for every $p\ge 2$, and every integer $n\ge 2$, we have
\begin{equation}\label{1.7}
\Big\|\sup_{0\le t\le1}|X_t-X_t^n|\Big\|_{L^p(\Omega)}
\le C\!\left(p,d,\|a\|_{\infty,\infty},
\|\nabla a\|_{\infty,\infty},\|b\|_{2,0}\right)
n^{-\frac12}\log(n)^{\frac32}.
\end{equation}
\end{theorem}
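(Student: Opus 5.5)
We will combine the Itô--Tanaka (Zvonkin) transformation used for Theorem~\ref{the1.2} with stochastic sewing estimates for the discretization error. The plan has four steps, with the sewing bound for the drift error as the main obstacle.

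\emph{Step 1: the Zvonkin transform.} For $\lambda$ large, let $u=(u_1,\dots,u_d)$ solve the backward parabolic system
\[
\partial_t u+\tfrac12 a_{ij}\partial_{ij}u+b\cdot\nabla u-\lambda u=-b,\qquad u(1,\cdot)=0 .
\]
The parabolic regularity estimates behind Theorem~\ref{the1.2} should give $\|\nabla u\|_0\le 1/2$, $\nabla^2 u\in L^\infty$ with a Dini-type modulus, and $\|\partial_t u\|$ controlled by $\|b\|_{2,0}$. This uses \eqref{1.5}, \eqref{1.6} with $\alpha=1$, and the facts that $\rho^{1/2}$ is Dini and $\rho$ is slowly varying. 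Set $\Phi(t,x)=x+u(t,x)$; it is a $C^1$ diffeomorphism, uniformly in $t$.

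Apply Itô's formula to $Y_t=\Phi(t,X_t)$ and $Y^n_t=\Phi(t,X^n_t)$. Subtracting the two identities removes the singular drift $b(t,X_t)$ from the exact equation. For the scheme \eqref{1.4}, the same computation leaves remainder terms:
\begin{align*}
&\int_0^t (I+\nabla u)(r,X^n_r)\big[b(r,X^n_{\kappa_n(r)})-b(r,X^n_r)\big]dr,\\
&\int_0^t \tfrac12\nabla^2u(r,X^n_r):\big[a(r,X^n_{\kappa_n(r)})-a(r,X^n_r)\big]dr,
\end{align*}
together with stochastic integrals with integrands $(I+\nabla u)\sigma(\cdot,X_r)-(I+\nabla u)\sigma(\cdot,X^n_{\kappa_n(r)})$.

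\emph{Step 2: Gronwall for the smooth part.} The terms that are Lipschitz in the state (here $\sigma$ Lipschitz is used) are handled by BDG and Gronwall, leaving the quantity $\mathbb{E}\sup_t|X^n_t-X^n_{\kappa_n(t)}|^p\lesssim n^{-p/2}$. The non-Lipschitz factor $\nabla u$ enters through $\nabla u(X_r)-\nabla u(X^n_r)$. Since $\nabla^2u$ is bounded, this is Lipschitz, so the full error $e_t=\sup_{s\le t}|X_s-X^n_s|$ can still be closed by Gronwall. The Lipschitz constant is not square-integrable in time in an obvious way, so we use a stochastic Gronwall or a control function $w(s,t)=\int_s^t\|b(r)\|_\rho^2dr$ to keep the constants depending only on $\|b\|_{2,0}$.

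\emph{Step 3: the drift and diffusion errors via sewing.} The key remaining quantities are
\[
\mathcal{E}_t=\int_0^t\big[h(r,X^n_{\kappa_n(r)})-h(r,X^n_r)\big]dr ,
\]
with $h=(I+\nabla u)b$ or $\nabla^2u:a$. Here $h$ is only bounded, with $L^2$ integrability in time in the first case. We will prove
\[
\|\sup_t|\mathcal{E}_t|\|_{L^p}\le C n^{-1/2}\log(n)^{3/2}
\]
via the stochastic sewing lemma, in the Lê--Ling style. Take the germ
\[
A_{s,t}=\mathbb{E}_{s}\int_s^t\big[h(r,X^n_{\kappa_n(r)})-h(r,X^n_r)\big]dr
\]
for $t-s\ge 2/n$, conditioning one step back. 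The Gaussian-type heat kernel bounds for the frozen-coefficient scheme, available by uniform ellipticity, give
\[
|\mathbb{E}_s[h(X^n_r)-h(X^n_{\kappa_n(r)})]|\lesssim \|h(r)\|_\infty \big(n^{-1/2}(r-s)^{-1/2}\wedge1\big).
\]
This yields $\|A_{s,t}\|\lesssim n^{-1/2}\int_s^t\|h(r)\|_\infty (r-s)^{-1/2}dr$. For $\|\mathbb{E}_s\delta A_{s,u,t}\|$ we need a bound with exponent strictly greater than $1$. This is precisely where the critical $L^2$-in-time integrability fails, since $w^{1/2}(t-s)^{1/2}$ is borderline. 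Each dyadic level of the sewing therefore costs a factor $\log n$. One log comes from $\int_{1/n}^1 r^{-1}dr$, one from the level count in the critical sewing lemma, and one from the BDG/sup step or the short-time regime $t-s<1/n$.

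\emph{Step 4: conclusion and main obstacle.} Feeding the bound of Step 3 into the Gronwall inequality of Step 2 gives \eqref{1.7}, since $\Phi^{-1}$ is Lipschitz. I expect the main obstacle to be Step 3. The difficulty is a critical (log-lossy) version of stochastic sewing that uses only $\|b\|_{2,0}$, not the Dini modulus, combined with uniform-in-$n$ density or heat-kernel bounds for $X^n$ conditioned on $\mathcal{F}_s$ with a variable, merely Lipschitz $\sigma$. My first attempt would be to obtain these by a parametrix comparison with the frozen Gaussian kernel on each step, tracking the three logarithmic losses explicitly.
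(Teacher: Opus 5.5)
Your outline follows the same architecture as the paper's proof. It uses the Zvonkin map from the backward equation \eqref{3.1} and It\^o's formula for $U(t,X_t)$ and $U(t,X^n_t)$ as in \eqref{4.29}. Everything Lipschitz in the state is handled by BDG plus Gronwall. The single drift term $\int_0^t (I+\nabla U)(r,X^n_r)\big(b(r,X^n_r)-b(r,X^n_{\kappa_n(r)})\big)dr$ is handled by a critical L\^e--Ling sewing argument (Lemma~\ref{lem2.4}, then Lemma~\ref{lem2.5} for the supremum), with kernel estimates from comparing the scheme with the frozen-covariance Gaussian semigroup $T_{s,t}$ of \eqref{4.8}.

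The one structural difference is that the paper never proves kernel bounds for $X^n$ itself. It first applies Girsanov's theorem with density built from $(\sigma^{-1}b)(r,\bar X^n_{\kappa_n(r)})$, so $\mathbb{E}\rho^2\le C$ since $\sigma^{-1}b\in L^2([0,1];L^\infty)$. This reduces to the driftless scheme $\bar X^n$ of \eqref{4.1}, at the cost only of passing from $L^p$ to $L^{2p}$. For $\bar X^n$ the comparison with $T_{s,t}$ produces only the covariance-mismatch kernel \eqref{4.12}. Keeping the drift, as you propose, is workable but adds a first-order term $b\cdot\nabla T_{\tau,r}h$ to the expansion, which must be controlled using only $b\in L^2([0,1];L^\infty)$.

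Two smaller points on the drift term. It has the product form $g(X^n_r)\big(f(X^n_r)-f(X^n_{\kappa_n(r)})\big)$ with $g=I+\nabla U$, not $h(X^n_{\kappa_n(r)})-h(X^n_r)$. The commutator is what needs $\nabla g\in L^2([0,1];L^\infty)$; this is the term $I^{2,n}$ in \eqref{4.7}. Also, your factor $I+\nabla u$ is the correct one. \eqref{4.29} drops the $I$, which is harmless since Lemma~\ref{lem4.1} applies to $g=I+\nabla U$.

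Two parts of your plan need correcting.

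\textbf{Regularity of $\nabla^2U$.} Theorem~\ref{the2.3} only gives $\nabla^2U\in L^2([0,1];L^\infty)$, not boundedness (nor a Dini modulus), and with $b$ merely square integrable in time you should not expect more. This is exactly what is needed. BDG on $\int_0^t(\nabla U(r,X_r)-\nabla U(r,X^n_r))\sigma(r,X^n_{\kappa_n(r)})dW_r$ produces the weight $\|\nabla^2U(r)\|_\infty^2\in L^1$. Ordinary Gronwall with weight $1+\|\nabla\sigma(r)\|_\infty^2+\|\nabla^2U(r)\|_\infty^2$ then closes the estimate; no stochastic Gronwall is required. The term $\nabla^2U:\big(a(r,X^n_{\kappa_n(r)})-a(r,X^n_r)\big)$ needs no sewing at all, only Lipschitz $a$ and $\|X^n_r-X^n_{\kappa_n(r)}\|_{L^p}\lesssim n^{-1/2}$. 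The resulting constant depends on $\|\nabla^2U\|_{2,\infty}$, hence on the Dini norm of $b$. So there is no point trying to make it depend on $\|b\|_{2,0}$ alone.

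\textbf{Where the criticality sits in the sewing step.} Take the germ $A_{s,t}=\mathbb{E}_s\delta\mathcal{A}_{s,t}$, defined for all $s\le t$. The tower property gives $\mathbb{E}_s\delta A_{s,u,t}=0$ identically, so that hypothesis costs nothing. The borderline bound is $\|\delta A_{s,u,t}\|_{L^p(\Omega|\mathcal{F}_v)}\lesssim\Gamma_2w(s,t)^{1/2}$, with exponent exactly $1/2$ and $\Gamma_2\simeq n^{-1/2}\log(n)^{1/2}$. Lemma~\ref{lem2.4}, applied to $J=\delta\mathcal{A}-A$ (so $\mathbb{E}_sJ_{s,t}=0$), yields $\Gamma_2(1+|\log\Gamma_2|)w^{1/2}\simeq n^{-1/2}\log(n)^{3/2}$.

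The logarithms therefore come from two sources only:
\begin{itemize}
\item $\log(n)^{1/2}$ from Cauchy--Schwarz in $\int_{\bar s}^t\|b(r)\|_0(\kappa_n(r)-\bar s)^{-1/2}dr$, where $\bar s=\kappa_n(s)+1/n$;
\item one full $\log n$ from $|\log\Gamma_2|$.
\end{itemize}
The supremum is free by Lemma~\ref{lem2.5}, and the regime $t\le\kappa_n(s)+4/n$ is trivially $O(n^{-1/2})$. Your count of three logarithmic losses would overshoot $\log(n)^{3/2}$.
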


The estimate \eqref{1.7} shows that, under the additional Lipschitz regularity
and uniform ellipticity on $a$,
the Euler--Maruyama scheme attains the strong convergence order $1/2$
(up to a logarithmic factor).
Since uniform ellipticity provides additional non-degeneracy of the noise,
it is natural to ask whether this structural assumption allows
an improvement of the strong convergence rate.
The following example shows that this is not the case:
even under uniform ellipticity, the order $1/2$ is optimal.

\begin{remark}\label{rem1.5} We show that the strong convergence order $1/2$ in Theorem~\ref{the1.4}
is optimal, even for smooth, bounded, and uniformly elliptic diffusion coefficients with vanishing drift.

\smallskip
Consider \eqref{1.1} with $d=1$, $b\equiv0$, and $\sigma(x)=2+\tanh(x)$.
Then $1<\sigma(x)<3$ and $|\sigma'(x)|\le1$ for all $x\in\mathbb{R}$.
Let $t_k=k/n$ and $\Delta W_k=W_{t_{k+1}}-W_{t_k}$.
The Euler--Maruyama scheme reads
\begin{equation}\label{1.8}
X^n_{t_{k+1}}
= X^n_{t_k} + \sigma(X^n_{t_k})\Delta W_k,
\qquad X^n_0=0 .
\end{equation}
Fix $k\in\{0,\dots,n-1\}$.
Using the It\^{o}--Taylor expansion, we obtain
\begin{equation}\label{1.9}
X_{t_{k+1}}
= X_{t_k}
+ \sigma(X_{t_k})\Delta W_k
+ \frac12\sigma(X_{t_k})\sigma'(X_{t_k})
\big((\Delta W_k)^2-\tfrac{1}{n}\big)+R_k,
\end{equation}
where $R_k$ is $\mathcal{F}_{t_{k+1}}$-measurable and satisfies
\begin{equation*}
\mathbb{E}[R_k|\mathcal{F}_{t_k}]=0,
\qquad
\mathbb{E}\!\left[|R_k|^2|\mathcal{F}_{t_k}\right]
\le Cn^{-3}.
\end{equation*}
Define the grid error $e_k=X_{t_k}-X^n_{t_k}$.
Subtracting \eqref{1.8} from \eqref{1.9} yields
\begin{equation}\label{1.10}
e_{k+1}
= e_k
+\Xi_k + B_k+R_k,
\end{equation}
where
\[
\Xi_k:=(\sigma(X_{t_k})-\sigma(X^n_{t_k}))\Delta W_k, \quad B_k=\frac12\sigma(X_{t_k})\sigma'(X_{t_k})
\big((\Delta W_k)^2-\tfrac{1}{n}\big).
\]

Observe that $(\Delta W_k)^2-\tfrac1n$ are independent, centered random variables,
and $B_k$ is $\mathcal F_{t_{k+1}}$-measurable with
$\mathbb E[B_k|\mathcal F_{t_k}]=0$.
Hence $(B_k)_{k\ge0}$ is an $L^2$-orthogonal family and
\[
\mathbb E\bigg(\sum_{k=0}^{n-1}B_k\bigg)^2
=\sum_{k=0}^{n-1}\mathbb E[B_k^2].
\]
Using $\mathbb E[((\Delta W_k)^2-\tfrac1n)^2]=2n^{-2}$, we compute
\begin{equation}\label{1.11}
\mathbb{E}\bigg(\sum_{k=0}^{n-1}B_k\bigg)^2
= \frac{1}{2n^2}\sum_{k=0}^{n-1}
\mathbb{E}\!\left[(\sigma(X_{t_k})\sigma'(X_{t_k}))^2\right]
\ge C_0n^{-1},
\end{equation}
where the last inequality follows from the fact that
$\sigma(x)\sigma'(x)\not\equiv0$ and the uniform ellipticity of $\sigma$,
which ensure that
$\mathbb E[(\sigma(X_t)\sigma'(X_t))^2]$ remains uniformly positive
over a nontrivial time interval.

\smallskip
For $\Xi_k$ and $R_k$,  we compute that
\begin{equation}\label{1.12}
\mathbb{E}\bigg(\sum_{k=0}^{n-1}\Xi_k\bigg)^2
\le C_2n^{-1}, \quad \mathbb{E}\bigg(\sum_{k=0}^{n-1}R_k\bigg)^2
\le C_1n^{-2}.
\end{equation}

Combining \eqref{1.10}--\eqref{1.12}, we obtain
\begin{equation*}
\begin{split}
\|e_{n+1}\|_{L^2(\Omega)}&=\Big\|\sum_{k=0}^{n-1}\Xi_k+\sum_{k=0}^{n-1}B_k+\sum_{k=0}^{n-1}R_k\Big\|_{L^2(\Omega)}
\geq \Big\|\sum_{k=0}^{n-1}\Xi_k+\sum_{k=0}^{n-1}B_k\Big\|_{L^2(\Omega)}-C_1n^{-1}\notag \\ &= \bigg\{\Big\|\sum_{k=0}^{n-1}\Xi_k\Big\|_{L^2(\Omega)}^2+\Big\|\sum_{k=0}^{n-1}B_k
\Big\|_{L^2(\Omega)}^2\bigg\}^{\frac{1}{2}}-C_1n^{-1}\geq C_3n^{-\frac{1}{2}}.
\end{split}
\end{equation*}
Consequently,
\[
\|X_1-X_1^n\|_{L^2(\Omega)}\geq C_3 n^{-\frac12}.
\]
\end{remark}

\section{Analytic and stochastic preliminaries}\label{sec2}\setcounter{equation}{0}
In this section we collect several analytic and stochastic tools that will be
used in the proofs of the main results.
These include properties of slowly varying functions,
regularity results for Kolmogorov equations with Lebesgue--Dini coefficients,
as well as two auxiliary lemmas from the theory of stochastic sewing and stochastic analysis.
We begin with a structural representation of slowly varying functions,
which will be used repeatedly in the sequel.
\begin{lemma}\label{lem2.1}(\cite[Lemma~2.8]{WWLD})
Let $\rho:[0,1]\to\mathbb{R}_+$ be nondecreasing and slowly varying at zero, with
$\rho(r)\to0$ as $r\to0$.
Then there exists $r_0\in(0,1]$ such that
\begin{equation}\label{2.1}
\rho(r)
=\exp\Big\{c(r)-\int_r^{r_0}\frac{\zeta(\tau)}{\tau}d\tau\Big\},
\qquad 0<r\le r_0,
\end{equation}
where $c$ is a continuous function and $\zeta$ is a nonnegative continuous
function on $(0,1]$ satisfying
\[
\lim_{r\to0}c(r)=c_0\in\mathbb{R},\qquad
\lim_{r\to0}\zeta(r)=0,\qquad
\lim_{r\to0}\int_r^{r_0}\frac{\zeta(\tau)}{\tau}d\tau=\infty.
\]
\end{lemma}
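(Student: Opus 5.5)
The plan is to prove the representation directly from the definition of slow variation, via the uniform convergence theorem and Karamata's representation. Set $L(x):=\rho(1/x)$ for $x\ge1$. Then the hypothesis $\lim_{r\to0}\rho(\upsilon r)/\rho(r)=1$ for every $\upsilon>0$ says exactly that $L$ is slowly varying at infinity in the sense of \cite{BGT}. Moreover, $L$ is positive, continuous and nonincreasing, with $L(x)\to0$ as $x\to\infty$. I would then invoke the Karamata representation theorem (\cite[Theorem~1.3.1]{BGT}): there are $x_0\ge1$, a measurable $\tilde c$ with $\tilde c(x)\to c_0\in\mathbb{R}$, and a measurable $\epsilon$ with $\epsilon(u)\to0$, such that
\[
L(x)=\exp\Big\{\tilde c(x)+\int_{x_0}^x\frac{\epsilon(u)}{u}du\Big\},\qquad x\ge x_0 .
\]
Changing variables $u=1/\tau$ and putting $r_0=1/x_0$ turns this into the form \eqref{2.1}, with $c(r)=\tilde c(1/r)$ and $\zeta(\tau)=-\epsilon(1/\tau)$.

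The representation theorem itself only provides measurable functions, so the next step is to make $c$ and $\zeta$ continuous and $\zeta\ge0$. The standard proof gives the more explicit form $\epsilon(x)=L(x)^{-1}\big(\text{average of }L\text{ near }x\big)\cdot(\ldots)$. Instead of relying on that, I would build the representation by hand. Set
\[
\ell(r):=\frac{1}{r}\int_r^{2r}\log\rho(\tau)\,d\tau .
\]
This function is $C^1$ on $(0,1/2]$, and by the uniform convergence theorem for slowly varying functions, $\log\rho(r)-\ell(r)\to0$. Its derivative is $r\ell'(r)=\log\rho(2r)-\log\rho(r)-\big(\ell(r)-\log\rho(r)\big)$ after a short computation. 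The first difference is nonnegative by monotonicity and tends to $0$ by slow variation, and the second also tends to $0$. Hence $\zeta_1(r):=r\ell'(r)\to0$ is continuous. We then have $\log\rho(r)=\ell(r_0)-\int_r^{r_0}\zeta_1(\tau)/\tau\,d\tau+(\log\rho(r)-\ell(r))$.

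Two corrections remain. To get $\zeta\ge0$, replace $\zeta_1$ by $\zeta:=\zeta_1^+$ and absorb the difference $\int_r^{r_0}\zeta_1^-/\tau\,d\tau$ into $c$. This requires that integral to converge as $r\to0$, which is the delicate point. I would try instead to show directly that $\ell$ is nondecreasing. Since $\rho$ is nondecreasing, $\ell(r)=\int_1^2\log\rho(r s)\,ds$ is an average of nondecreasing functions of $r$, so $\ell$ is itself nondecreasing and therefore $\zeta_1\ge0$ automatically. Finally, $\int_r^{r_0}\zeta/\tau\,d\tau=\ell(r_0)-\ell(r)\to\infty$ because $\ell(r)\to\log 0=-\infty$, which uses $\rho(r)\to0$. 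The function $c(r)=\ell(r_0)+(\log\rho(r)-\ell(r))$ is then continuous with limit $c_0=\ell(r_0)$.

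The main obstacle is justifying $\log\rho(r)-\ell(r)\to0$, i.e.\ that slow variation holds uniformly for $\upsilon\in[1,2]$. Since $\rho$ is monotone, I would get this for free without the general uniform convergence theorem: $\rho(r)\le\rho(rs)\le\rho(2r)$ for $s\in[1,2]$, and $\log\rho(2r)-\log\rho(r)\to0$.
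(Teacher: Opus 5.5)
Your argument is correct, and it is more self-contained than what the paper offers. The paper gives no proof: it quotes \cite[Lemma~2.8]{WWLD}, whose standard source is Karamata's representation theorem in \cite{BGT}. That theorem covers arbitrary measurable slowly varying functions, but it rests on the uniform convergence theorem. In its basic form it also yields only measurable $c,\zeta$ with no sign information, so continuity and $\zeta\ge0$ have to be read off from inside its proof, where $\epsilon(e^x)=h(x+1)-h(x)$ with $h=\log L(e^{\,\cdot})$.

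Your construction is essentially that proof done by hand in the monotone case, and monotonicity supplies everything at once:
\begin{itemize}
\item The sandwich $\log\rho(r)\le\ell(r)\le\log\rho(2r)$ replaces the uniform convergence theorem.
\item Since $\ell(r)=\int_1^2\log\rho(rs)\,ds$ is nondecreasing, $\zeta=r\ell'(r)\ge0$ automatically, so the detour via $\zeta_1^+$ is unnecessary.
\item The hypothesis $\rho(r)\to0$ gives $\int_r^{r_0}\zeta(\tau)\tau^{-1}d\tau=\ell(r_0)-\ell(r)\to\infty$.
\item Continuity of $\rho$, which is part of the paper's definition of slow variation, gives $\ell\in C^1$ and hence continuity of $\zeta$ and $c$.
\end{itemize}

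Two small fixes are needed. First, the derivative is $r\ell'(r)=2\log\rho(2r)-\log\rho(r)-\ell(r)=2\big(\log\rho(2r)-\log\rho(r)\big)-\big(\ell(r)-\log\rho(r)\big)$. Your expression drops the factor $2$ (it simplifies to $\log\rho(2r)-\ell(r)$). This is harmless, since both brackets still tend to $0$.

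Second, $\ell$ is defined only on $(0,1/2]$, so you need $r_0\le1/2$ and a continuous nonnegative extension of $\zeta$ to $(0,1]$. Alternatively, average over $[r/2,r]$ instead, i.e.\ set $\ell(r)=\frac{2}{r}\int_{r/2}^{r}\log\rho(\tau)\,d\tau$. This works verbatim on $(0,1]$, with $r\ell'(r)=\big(\log\rho(r)-\log\rho(r/2)\big)+\big(\log\rho(r)-\ell(r)\big)$. It also allows any $r_0\in(0,1]$, in particular $r_0=1$, which is the choice the paper actually makes in \eqref{2.16}.
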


Next, we recall the existence and regularity properties of the fundamental
solution associated with uniformly elliptic Kolmogorov operators.
\begin{lemma}\label{lem2.2}
Assume that \eqref{1.5} and \eqref{1.6} hold.
Then the operator
\[
\partial_t-\tfrac12\sum_{i,j=1}^da_{i,j}(t,x)\partial^2_{x_i,x_j}
=:L_t
\]
admits a fundamental solution, denoted by $\cK(r,t,x,y)$.
Moreover, for $t>r$, $\cK$ is twice continuously differentiable with respect to $x$,
and it satisfies Aronson-type estimates (see \cite[Theorem~2.3]{CHXZ}):
\begin{equation}\label{2.2}
\left\{
\begin{aligned}
&\frac{C_1}{(t-r)^{\frac d2}}
\exp\!\Big(-\tfrac{C_2|x-y|^2}{t-r}\Big)
\le \cK(r,t,x,y)
\le \frac{C_3}{(t-r)^{\frac d2}}
\exp\!\Big(-\tfrac{C_4|x-y|^2}{t-r}\Big),\\[0.2em]
&\big|\nabla_x^k \cK(r,t,x,y)\big|
\le \frac{C_5}{(t-r)^{\frac{d+m}{2}}}
\exp\!\Big(-\tfrac{C_6|x-y|^2}{t-r}\Big),
\quad k=1,2,
\end{aligned}
\right.
\end{equation}
where $C_i>0$ ($i=1,\ldots,6$) are constants independent of $(r,t,x,y)$.
\end{lemma}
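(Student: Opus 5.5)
Lemma~\ref{lem2.2} is essentially a restatement of the classical parametrix theory for non-divergence form operators with H\"older coefficients, so I plan to reduce it to \cite[Theorem~2.3]{CHXZ} and check that our hypotheses fit that setting. I read the index $m$ in the derivative bound as $m=k$, i.e.\ the bound is $(t-r)^{-(d+k)/2}$ times a Gaussian. The operator has no drift and no zero-order term. Its coefficients $a_{i,j}$ are only measurable in time, but they are uniformly elliptic and bounded by \eqref{1.5} and uniformly $\alpha$-H\"older in space by \eqref{1.6}. These are exactly the hypotheses of the Levi parametrix construction with time-measurable coefficients.

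\textbf{Step 1 (frozen kernel).} For fixed $y$, I freeze the coefficients at $y$ and let $Z_y(r,t,x)$ be the Gaussian kernel with covariance $A_y(r,t)=\int_r^t a(\tau,y)\,d\tau$. Then $\Gamma^{-1}(t-r)I\le A_y\le\Gamma(t-r)I$. This gives two-sided Gaussian bounds for $Z_y$, and bounds of order $(t-r)^{-(d+k)/2}$ for its $x$-derivatives of every order $k$, uniformly in $y$. Measurability in $t$ causes no trouble, because $Z_y$ is explicit.

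\textbf{Step 2 (Levi series).} I write $\cK=Z+Z\otimes\Phi$, where $\Phi=\sum_{j\ge1}(L_tZ)^{\otimes j}$ in the space-time convolution sense. By \eqref{1.6},
\[
|L_tZ(r,t,x,y)|\le C|x-y|^\alpha(t-r)^{-\frac{d+2}{2}}e^{-c|x-y|^2/(t-r)}\le C(t-r)^{-1+\frac\alpha2}(t-r)^{-\frac d2}e^{-c'|x-y|^2/(t-r)}.
\]
This singularity is integrable in time. Iterating the bound with the beta-function estimates shows that the series converges, and that $\Phi$ has the same bound and is H\"older continuous in $x$ with any exponent $\alpha'<\alpha$. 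The upper bound in \eqref{2.2} and the first-derivative bound then follow by differentiating under the integral.

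\textbf{Step 3 (second derivatives).} The main obstacle is the bound on $\nabla_x^2$. Differentiating $Z\otimes\Phi$ twice gives a non-integrable factor $(t-\tau)^{-1}$. I will handle this as in the classical argument: write $\Phi(\tau,z)=[\Phi(\tau,z)-\Phi(\tau,x)]+\Phi(\tau,x)$. The first term uses the H\"older continuity of $\Phi$ to recover a factor $(t-\tau)^{\alpha'/2}$. For the second term, the space integral of $\nabla^2_xZ_z(\tau,t,x)$ in $z$ is controlled by the H\"older continuity of $a$ in the frozen variable. This yields the $k=2$ estimate, with $\cK$ being $\mathcal C^2$ in $x$ for $t>r$.

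\textbf{Step 4 (lower bound).} The Gaussian lower bound follows from the on-diagonal bound $\cK\ge c(t-r)^{-d/2}$ for $|x-y|\le\sqrt{t-r}$, which holds because the correction term is $O((t-r)^{\alpha/2})$ smaller than $Z$ for short times. I then apply a standard chaining argument via the Chapman--Kolmogorov identity, which also gives positivity.

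All of this is carried out in \cite{CHXZ}, so in the write-up I would simply cite it after checking the hypotheses.
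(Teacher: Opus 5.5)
Your proposal is correct and takes the same route as the paper, which gives no argument beyond invoking \cite[Theorem~2.3]{CHXZ}; you are also right that $m$ in \eqref{2.2} should read $k$, and your Levi-parametrix sketch is the standard proof behind that citation. There are two minor slips in the sketch, neither affecting the estimates. First, in Step~4 the chaining argument discards parts of the Chapman--Kolmogorov integral, so it needs $\cK\ge0$ as an input (e.g.\ from the maximum principle) rather than producing positivity. Second, with $L_t$ as defined in the lemma, the Levi series should read $\Phi=\sum_{j\ge1}(-L_tZ)^{\otimes j}$.
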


Let $\lambda>0$ and consider the Kolmogorov equation
\begin{equation}\label{2.3}
\left\{
\begin{aligned}
\partial_t u(t,x)
&=\tfrac12\sum_{i,j=1}^da_{i,j}(t,x)\partial^2_{x_i,x_j}u(t,x)
+g(t,x)\cdot\nabla u(t,x) \\
&\quad -\lambda u(t,x)+f(t,x),
\qquad (t,x)\in(0,1]\times\mathbb{R}^d,\\
u(0,x)&=0,\qquad x\in\mathbb{R}^d .
\end{aligned}
\right.
\end{equation}

A function $u$ is called a \emph{strong solution} of \eqref{2.3} if $u, \partial_tu, \partial^2_{x_i,x_j}u\in L^1\big([0,1];L^\infty_{\mathrm{loc}}(\mathbb{R}^d)\big),
\ 1\le i,j\le d$,
and \eqref{2.3} holds almost everywhere.

\begin{theorem}\label{the2.3}
Let $\lambda>0$ be sufficiently large.
Assume that $a(t,x)=(a_{i,j}(t,x))_{d\times d}$ satisfies \eqref{1.5}--\eqref{1.6},
and that $\rho:[0,1]\to\mathbb{R}_+$ is slowly varying at zero with
$\rho^{1/2}$ being a Dini function.
Suppose $f\in L^2\big([0,1];\mathcal{D}_b^\rho(\mathbb{R}^d)\big)$ and $g\in L^2\big([0,1];\mathcal{D}_b^\rho(\mathbb{R}^d;\mathbb{R}^d)\big)$.
Define
\[
\mathcal{X}
:=\Big\{
v\in L^\infty\big([0,1];W^{1,\infty}(\mathbb{R}^d)\big):
\partial_tv,\ \nabla^2v\in L^2\big([0,1];L^\infty(\mathbb{R}^d)\big)
\Big\}.
\]
Then \eqref{2.3} admits a unique strong solution $u\in\mathcal{X}$.
Moreover,
\begin{equation}\label{2.4}
\sup_{0\le t\le1}\|\nabla u(t)\|_\infty\le \tfrac12 .
\end{equation}
\end{theorem}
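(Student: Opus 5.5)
We construct the solution as a fixed point of the Duhamel (mild) formulation built from the fundamental solution $\cK$ of Lemma~\ref{lem2.2}, and then obtain \eqref{2.4} by taking $\lambda$ large. Writing $u=\Phi(u)$ with
\[
\Phi(v)(t,x):=\int_0^t e^{-\lambda(t-r)}\int_{\mathbb{R}^d}\cK(r,t,x,y)\big[g(r,y)\cdot\nabla v(r,y)+f(r,y)\big]dy\,dr ,
\]
the plan is as follows. First, prove gradient estimates in $L^\infty([0,1];W^{1,\infty})$. Second, prove second-derivative estimates in $L^2([0,1];L^\infty)$ by exploiting the Dini continuity of the source term. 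Third, run a contraction argument. Finally, verify that the mild solution is a strong solution and that strong solutions in $\mathcal{X}$ are unique.

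\emph{Gradient bounds.} By \eqref{2.2} with $k=1$ we have $\int|\nabla_x\cK(r,t,x,y)|dy\le C(t-r)^{-1/2}$. For $h\in L^2([0,1];L^\infty)$ this gives
\[
\|\nabla \Phi_h(t)\|_\infty\le C\int_0^t e^{-\lambda(t-r)}(t-r)^{-1/2}\|h(r)\|_\infty dr\le C\Big(\int_0^t e^{-2\lambda s}s^{-1}ds\Big)^{1/2}\|h\|_{2,\infty}.
\]
This integral diverges at $s=0$, so Cauchy--Schwarz with exponent $2$ fails. This is precisely the critical $L^2$-in-time borderline. To overcome it, we use the spatial modulus of $h$: replace $h(r,y)$ by $h(r,y)-h(r,x)$ and use $\int\nabla_x\cK(r,t,x,y)dy\approx 0$. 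Because $\cK$ has variable coefficients, this identity is only approximate. The correction comes from the parametrix with coefficients frozen at $x$, and its error is controlled by the H\"older bound \eqref{1.6}, which yields an extra factor $(t-r)^{\alpha/2}$. The main term is then bounded by $C(t-r)^{-1/2}\rho((t-r)^{1/2})\,\|h(r)\|_\rho$ (using slow variation to absorb the Gaussian tails). By Cauchy--Schwarz,
\[
\Big(\int_0^1 e^{-2\lambda s}\rho(s^{1/2})\,s^{-1}ds\Big)^{1/2},
\]
which is finite because $\rho^{1/2}$ is Dini implies $\rho$ is Dini. By dominated convergence this quantity tends to $0$ as $\lambda\to\infty$.

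\emph{Second derivatives and the contraction.} The second-derivative estimate proceeds the same way, using $k=2$ in \eqref{2.2} and $\int\nabla_x^2\cK\,dy\approx0$. This produces the kernel $(t-r)^{-1}\rho((t-r)^{1/2})$. A Young-type convolution bound in time, splitting $\rho=\rho^{1/2}\cdot\rho^{1/2}$ and using that $\rho^{1/2}$ is Dini, gives $\nabla^2\Phi_h\in L^2([0,1];L^\infty)$ with norm bounded by $C\|h\|_{2,\rho}$.

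The key obstacle is that the source $h=g\cdot\nabla v+f$ must itself be Dini in space with an $L^2$-in-time coefficient. This means we need $\nabla v$ to have a $\rho$-modulus. Interpolating $\nabla^2 v\in L^2L^\infty$ gives $[\nabla v(r)]_\rho\le\|\nabla^2v(r)\|_\infty\sup_{\tau\le1}\tau/\rho(\tau)$. The supremum is finite because $\rho$ is slowly varying, so $\rho(\tau)\ge c\tau^{\epsilon}$. However, this yields only an $L^1$-in-time product $\|g(r)\|_\rho\|\nabla^2 v(r)\|_\infty$. I would therefore work in the weighted space with norm $\sup_t\|\nabla v(t)\|_\infty+\sup_t\|v\|+\big(\int_0^1\|\nabla^2v\|_\infty^2\big)^{1/2}$ and run the estimate on short time intervals, or use a $\lambda$-weighted norm, to close the contraction. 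The smallness factor $\epsilon(\lambda)\to0$ obtained above makes $\Phi$ a contraction on $\mathcal{X}$ for $\lambda$ large; if needed, we first do this on $[0,\delta]$ and then iterate.

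\emph{Strong solution, uniqueness, and \eqref{2.4}.} Regularity of $\partial_t u$ follows from the equation itself. That the mild solution is strong follows by mollifying $f$ and $g$ and passing to the limit in the a priori estimates. Uniqueness comes from applying the same estimate to the difference of two solutions, which satisfies the homogeneous equation. Finally, the same smallness gives $\sup_t\|\nabla u(t)\|_\infty\le\epsilon(\lambda)(\|f\|_{2,\rho}+\cdots)\le\tfrac12$ for $\lambda$ sufficiently large, which is \eqref{2.4}.
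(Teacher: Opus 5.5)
There is a genuine gap, and it sits exactly at the point you call the key obstacle. The first-order term $g\cdot\nabla v$ is not controlled in the space you propose, and neither short time intervals nor a $\lambda$-weighted norm repairs this. To use $\int\nabla_x^2\cK\,dy=0$ you must handle $g(r,x)\cdot\bigl(\nabla v(r,y)-\nabla v(r,x)\bigr)$. The only modulus of $\nabla v$ your space provides comes from $\|\nabla^2 v(r)\|_\infty$. Suppose you skip $\rho$ and use the Lipschitz bound directly; this is already better than going through $\sup_\tau\tau/\rho(\tau)$ and the kernel $(t-r)^{-1}\rho((t-r)^{1/2})$. You still get
\[
\|\nabla^2\Phi(v)(t)\|_\infty\lesssim\int_0^t e^{-\lambda(t-r)}(t-r)^{-\frac12}\,\|g(r)\|_\infty\,\|\nabla^2 v(r)\|_\infty\,dr+\cdots,
\]
and this right-hand side is not in $L^2(0,1)$ in general. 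Here is an example, with $a,b\in(\frac12,\frac34)$:
\begin{itemize}
\item take the spatially constant drift $g(t,x)=t^{-1/2}\log(e/t)^{-a}e_1$, which lies in $L^2([0,1];\mathcal{D}_b^\rho(\mathbb{R}^d;\mathbb{R}^d))$;
\item take $v(r,x)=\psi(r)^{-1}\chi(\psi(r)x)$ with a fixed bump $\chi$ and $\psi(r)=r^{-1/2}\log(e/r)^{-b}$. Then $v$ and $\nabla v$ are bounded and $\|\nabla^2 v(r)\|_\infty\simeq\psi(r)\in L^2$, so $v$ lies in your space;
\item keeping only $r<t/2$, the right-hand side is $\gtrsim t^{-1/2}\log(2e/t)^{1-a-b}$ for $t\le 1/\lambda$. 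Its square is not integrable at $0$ because $a+b<\frac32$.
\end{itemize}
The blow-up comes from an arbitrarily short window near $t=0$ on which $e^{-\lambda(t-r)}\approx 1$. It is therefore insensitive to $\lambda$ and to the length of the time interval. So you have not shown that $\Phi$ maps your space into itself, and the contraction, the uniqueness step and \eqref{2.4} all rest on this missing estimate. The underlying reason is that a frozen transport term $g(r,x)\cdot\nabla v$ whose velocity is only $L^2$ in time is critical. It cannot be absorbed perturbatively.

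The idea you are missing is to remove the transport term by a moving frame instead of estimating it. The paper first mollifies $f,g$ in space, so that \cite{TDW} gives a classical $u^m$ with the Duhamel representation. It then fixes $x_0$, solves $\dot x_t=-g^m(t,x_0+x_t)$ with $x_t|_{t=0}=0$, sets $\varphi_t=x_0+x_t$, and considers $\hat u^m(t,x)=u^m(t,x+\varphi_t)$. This function solves the same type of equation with coefficients $a(t,x+\varphi_t)$ and drift $\tilde g^m(t,x)=g^m(t,x+\varphi_t)-g^m(t,\varphi_t)$. The new drift vanishes at $x=0$; for your spatially constant example, $\tilde g\equiv 0$. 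Hence at the evaluation point only the Dini modulus of $g$ and the \emph{boundedness} of $\nabla u$ enter:
\[
|\tilde g^m(r,y)\cdot\nabla \hat u^m(r,y)|\le \|g(r)\|_\rho\,\rho(|y|)\,\|\nabla u^m(r)\|_0,\qquad |y|\le 1 .
\]
The shifted kernel $\cK(r,t,\varphi_t,y+\varphi_r)$ is centred at $y=\varphi_t-\varphi_r$. Since $|\varphi_t-\varphi_r|\le (t-r)^{1/2}\|g\|_{2,0}$, the shift has parabolic size and is absorbed into the Gaussian; this is exactly where $g\in L^2_t$ is used. One obtains a closed inequality for $\sup_t\|\nabla u^m(t)\|_0$ alone, with the factor $\bigl(\int_0^1 e^{-2\lambda r^2}[r+\rho(r)/r]\,dr\bigr)^{1/2}\to 0$, which gives \eqref{2.4} for $\lambda$ large. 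The Hessian bound then follows from the gradient bound by Young's inequality, and $u$ is obtained by compactness.

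As a side remark, $\int\nabla_x\cK(r,t,x,y)\,dy=0$ holds exactly, not approximately. Constants solve the equation in $(t,x)$, so $\int\cK(r,t,x,y)\,dy\equiv 1$, and no parametrix correction is needed for the source term.
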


\begin{proof}
Let $\varrho\in \mathcal{C}_0^\infty(\mathbb{R}^d)$ be a standard spatial mollifier such that
$\mathrm{supp}(\varrho)\subset B_1$ and $\int_{\mathbb{R}^d}\varrho(x)dx=1$.
For $m\in\mathbb{N}$, set $\varrho_m(x)=m^d\varrho(mx)$ and define
\[
f^m(t,x):=(f(t,\cdot)\ast \varrho_m)(x),\qquad
g^m(t,x):=(g(t,\cdot)\ast \varrho_m)(x).
\]
Then $f^m\in L^2([0,1];\mathcal{C}_b^\alpha(\mathbb{R}^d))$ and
$g^m\in L^2([0,1];\mathcal{C}_b^\alpha(\mathbb{R}^d;\mathbb{R}^d))$ for every $\alpha\in(0,1)$, and
\begin{equation}\label{2.5}
\lim_{m\to\infty} |f^m(t,x)-f(t,x)|=0,\qquad
\lim_{m\to\infty} |g^m(t,x)-g(t,x)|=0,
\quad \forall\ (t,x)\in[0,1]\times\mathbb{R}^d.
\end{equation}
Moreover,
\begin{equation}\label{2.6}
\|f^m(r)\|_\rho\le \|f(r)\|_\rho,\qquad
\|g^m(r)\|_\rho\le \|g(r)\|_\rho.
\end{equation}

Consider the Kolmogorov equation
\begin{equation}\label{2.7}
\left\{
\begin{aligned}
\partial_t u^m(t,x)
&= \tfrac{1}{2}\sum_{i,j=1}^d a_{i,j}(t,x)\partial^2_{x_i,x_j} u^m(t,x)
+ g^m(t,x)\cdot \nabla u^m(t,x) \\
&\quad - \lambda u^m(t,x) + f^m(t,x),
\qquad (t,x)\in (0,1]\times \mathbb{R}^d, \\
u^m(0,x)&=0,\qquad x\in \mathbb{R}^d .
\end{aligned}
\right.
\end{equation}
By \cite[Theorem~2.1]{TDW}, there exists a unique
\[
u^m\in L^2\big([0,1];\mathcal{C}_b^{2+\alpha}(\mathbb{R}^d)\big)\cap
W^{1,2}\big([0,1];\mathcal{C}_b^\alpha(\mathbb{R}^d)\big)
\]
solving \eqref{2.7}. Moreover, $u^m$ has the integral representation
\begin{equation}\label{2.8}
u^m(t,x)
=\int_0^t\int_{\mathbb{R}^d} e^{-\lambda(t-r)}\cK(r,t,x,y)
\big[g^m(r,y)\cdot \nabla u^m(r,y)+f^m(r,y)\big]dydr,
\end{equation}
where $\cK(r,t,x,y)$ satisfies \eqref{2.2}.

\smallskip
Fix $x_0\in\mathbb{R}^d$ and consider
\begin{equation}\label{2.9}
\dot x_t=-g^m(t,x_0+x_t),\qquad x_t\big|_{t=0}=0.
\end{equation}
This ODE admits a global absolutely continuous solution and satisfies the uniform increment estimate:
for $0\le r\le t\le 1$,
\begin{equation}\label{2.10}
|x_t-x_r|
\le \int_r^t \|g^m(\tau)\|_0d\tau
\le (t-r)^{\frac12}\|g\|_{2,0},
\end{equation}
which is independent of $x_0$ and $m$.

Set $\varphi_t^{x_0}=x_0+x_t$ and define
\[
\hat u^m(t,x):=u^m(t,x+\varphi_t^{x_0}),\quad
\hat a_{i,j}(t,x):=a_{i,j}(t,x+\varphi_t^{x_0}),
\]
\[
\hat f^m(t,x):=f^m(t,x+\varphi_t^{x_0}),\quad
\hat g^m(t,x):=g^m(t,x+\varphi_t^{x_0}),\quad
\tilde g^m(t,x):=\hat g^m(t,x)-\hat g^m(t,0).
\]
Then $\hat u^m$ solves
\begin{equation}\label{2.11}
\left\{
\begin{aligned}
\partial_t \hat u^m(t,x)
&= \tfrac{1}{2}\sum_{i,j=1}^d\hat a_{i,j}(t,x)\partial^2_{x_i,x_j}\hat u^m(t,x)
+ \tilde g^m(t,x)\cdot \nabla \hat u^m(t,x) \\
&\quad - \lambda \hat u^m(t,x) + \hat f^m(t,x),
\qquad (t,x)\in (0,1]\times \mathbb{R}^d, \\
\hat u^m(0,x)&=0,\qquad x\in \mathbb{R}^d .
\end{aligned}
\right.
\end{equation}
Since $\hat a_{i,j}$ also satisfies \eqref{1.5} and \eqref{1.6}, the operator
\[
\partial_t-\tfrac12\sum_{i,j=1}^d\hat a_{i,j}(t,x)\partial^2_{x_i,x_j}=: \hat L_t(x)
\]
has a fundamental solution, denoted by $\hat \cK_{\varphi^{x_0}}(r,t,x,y)$, and the unique strong solution of \eqref{2.11} can be represented as
\begin{equation}\label{2.12}
\hat u^m(t,x)
=\int_0^t e^{-\lambda(t-r)}\int_{\mathbb{R}^d}\hat \cK_{\varphi^{x_0}}(r,t,x,y)
\big[\tilde g^m(r,y)\cdot \nabla\hat u^m(r,y)+\hat f^m(r,y)\big]dydr.
\end{equation}
Since $\hat L_t$ is a spatial translate of $L_t$, one has
\[
\hat \cK_{\varphi^{x_0}}(r,t,x,y)
=\cK\bigl(r,t,x+\varphi_t^{x_0},y+\varphi_r^{x_0}\bigr).
\]

Hence, by \eqref{2.2},
\begin{equation}\label{2.13}
\begin{aligned}
|\nabla \hat u^m(t,0)|
&\le \int_0^t e^{-\lambda(t-r)}\int_{\mathbb{R}^d}
\big|\nabla_x \hat \cK_{\varphi^{x_0}}(r,t,0,y)\big|
\big(|\tilde g^m(r,y)\cdot \nabla\hat u^m(r,y)|
\\ & \qquad +|\hat f^m(r,y)-\hat f^m(r,0)|\big)dydr \\
&\le C\int_0^t e^{-\lambda(t-r)}\int_{\mathbb{R}^d}
(t-r)^{-\frac{d+1}{2}}
\exp\!\Big(-\frac{C_6|\varphi^{x_0}_t-\varphi^{x_0}_r-y|^2}{t-r}\Big) \\
&\quad \times \big[|\nabla\hat u^m(r,y)|\|g(r)\|_\rho+\|f(r)\|_\rho\big]
\big[1_{|y|\le 1}\rho(|y|)+1_{|y|>1}\big]dydr \\
&\le C\int_0^t e^{-\lambda(t-r)}\int_{\mathbb{R}^d}
(t-r)^{-\frac{d+1}{2}}
\exp\!\Big(-\frac{C_6|y|^2}{2(t-r)}\Big) \\
&\quad \times \big[|\nabla\hat u^m(r,y)|\|g(r)\|_\rho+\|f(r)\|_\rho\big]
\big[1_{|y|\le 1}\rho(|y|)+1_{|y|>1}\big]dydr,
\end{aligned}
\end{equation}
where we used \eqref{2.10} to absorb the spatial shift into the Gaussian kernel.

Therefore,
\begin{equation}\label{2.14}
\begin{aligned}
|\nabla \hat u^m(t,0)|
&\le C\int_0^t e^{-\lambda(t-r)}
\big[\|\nabla u^m(r)\|_0\|g(r)\|_\rho+\|f(r)\|_\rho\big] \\
&\quad\times\bigg(\int_0^1 e^{-\frac{C_6\tau^2}{2(t-r)}}(t-r)^{-\frac{d+1}{2}}\tau^{d-1}\rho(\tau)d\tau+1\bigg)dr \\
&\le C\int_0^t e^{-\lambda(t-r)}
\big[\|\nabla u^m(r)\|_0\|g(r)\|_\rho+\|f(r)\|_\rho\big] \\
&\quad\times\bigg[\int_0^1 e^{-\frac{C_6\tau^2}{4(t-r)}}
\Big(\frac{\rho(\tau)}{\rho(\sqrt{t-r})}\Big)^{\frac12}
\frac{\rho(\sqrt{t-r})^{\frac12}}{(t-r)^{\frac12}}
\frac{\rho(\tau)^{\frac12}}{\tau}d\tau+1\bigg]dr,
\end{aligned}
\end{equation}
where we used
\[
e^{-\frac{C_6\tau^2}{2(t-r)}}(t-r)^{-\frac{d+1}{2}}\tau^{d-1}\rho(\tau)
\le
C e^{-\frac{C_6\tau^2}{4(t-r)}}
\Big(\frac{\rho(\tau)}{\rho(\sqrt{t-r})}\Big)^{\frac12}
\frac{\rho(\sqrt{t-r})^{\frac12}}{(t-r)^{\frac12}}
\frac{\rho(\tau)^{\frac12}}{\tau}.
\]

On the other hand,
\begin{equation}\label{2.15}
\begin{aligned}
\sup_{\tau\in[0,1],r\in(0,1]}
\bigg[e^{-\frac{C_6\tau^2}{4r}}
\Big(\frac{\rho(\tau)}{\rho(\sqrt{r})}\Big)^{\frac12}\bigg]
&\le 1+\sup_{\mu\ge 1}\sup_{r\in(0,1/\mu]}
\bigg[e^{-\frac{C_6\mu}{4}}
\Big(\frac{\rho(\sqrt{\mu r})}{\rho(\sqrt{r})}\Big)^{\frac12}\bigg].
\end{aligned}
\end{equation}
Choosing $r_0=1$ in Lemma~\ref{lem2.1} and using \eqref{2.1}, we obtain
\begin{equation}\label{2.16}
\begin{aligned}
\sup_{r\in(0,1/\mu]}\frac{\rho(\sqrt{\mu r})}{\rho(\sqrt{r})}
&=\sup_{r\in(0,1/\mu]}
\exp\bigg\{c(\sqrt{\mu r})-c(\sqrt{r})
+\int_{\sqrt{r}}^{\sqrt{\mu r}}\frac{\zeta(\tau)}{\tau}d\tau\bigg\} \\
&\le \exp\Big\{2\sup_{0\le \tau\le 1}|c(\tau)|
+\sup_{0\le \tau\le 1}\zeta(\tau)\log(\sqrt{\mu})\Big\}
\le C\mu^{\frac12\sup_{0\le \tau\le 1}\zeta(\tau)}.
\end{aligned}
\end{equation}
Combining \eqref{2.14}--\eqref{2.16} yields
\begin{equation}\label{2.17}
\begin{aligned}
|\nabla \hat u^m(t,0)|
&\le C\int_0^t e^{-\lambda(t-r)}
\big[\|\nabla u^m(r)\|_0\|g(r)\|_\rho+\|f(r)\|_\rho\big]\bigg[
\frac{\rho(\sqrt{t-r})^{\frac12}}{(t-r)^{\frac12}}
\int_0^1\frac{\rho(\tau)^{\frac12}}{\tau}d\tau+1\bigg]dr \\
&\le C\int_0^t e^{-\lambda(t-r)}
\big[\|\nabla u^m(r)\|_0\|g(r)\|_\rho+\|f(r)\|_\rho\big]
\bigg[\frac{\rho(\sqrt{t-r})^{\frac12}}{(t-r)^{\frac12}}+1\bigg]dr,
\end{aligned}
\end{equation}
where in the second inequality we used the assumption that $\rho^{1/2}$ is a Dini function.

Since $x_0\in\mathbb{R}^d$ is arbitrary, H\"older's inequality and \eqref{2.17} yield
\begin{equation}\label{2.18}
\sup_{0\le t\le 1}\|\nabla u^m(t)\|_0
\le
C\big[\|g\|_{2,\rho}\sup_{0\le r\le 1}\|\nabla u^m(r)\|_0+\|f\|_{2,\rho}\big]
\bigg(\int_0^1 e^{-2\lambda r^2}\Big[r+\frac{\rho(r)}{r}\Big]dr\bigg)^{\frac12},
\end{equation}
where $\|\nabla u^m(t)\|_0:=\sup_{x\in \mathbb{R}^d}|\nabla u^m(t,x)|$.

Since $\rho(r)/r\in L^1([0,1])$, the integral in parentheses tends to $0$ as $\lambda\to\infty$
by dominated convergence. Since $\lambda$ is large enough, we have
\[
C\big[\|g\|_{2,\rho}+\|f\|_{2,\rho}\big]
\bigg(\int_0^1 e^{-2\lambda r^2}\Big[r+\frac{\rho(r)}{r}\Big]dr\bigg)^{\frac12}
\le \frac13.
\]
Then \eqref{2.18} yields
\begin{equation}\label{2.19}
\sup_{0\le t\le 1}\|\nabla u^m(t)\|_0
\le \frac{3C}{2}\|f\|_{2,\rho}
\bigg(\int_0^{1} e^{-2\lambda r^2}\Big[r+\frac{\rho(r)}{r}\Big]dr\bigg)^{\frac12}
\le \frac12.
\end{equation}

This estimate, together with \eqref{2.8} and \eqref{2.2}, also implies
\begin{equation}\label{2.20}
\sup_{0\le t\le 1}\|u^m(t)\|_0
\le \int_0^1\big[\|g(r)\|_0\sup_{0\le \tau\le 1}\|\nabla u^m(\tau)\|_0+\|f(r)\|_0\big]dr
\le C\|f\|_{2,\rho}.
\end{equation}

For the second-order derivatives of $u^m$, an analogue of \eqref{2.17} gives
\[
|\nabla^2\hat u^m(t,0)|
\le C\int_0^t e^{-\lambda(t-r)}
\big[\|g(r)\|_\rho+\|f(r)\|_\rho\big](t-r)^{-\frac12}
\bigg[\frac{\rho(\sqrt{t-r})^{\frac12}}{(t-r)^{\frac12}}+1\bigg]dr,
\]
which, by Young's inequality, implies
\begin{equation}\label{2.21}
\|\nabla^2 u^m\|_{2,0}
\le C\big[\|g\|_{2,\rho}+\|f\|_{2,\rho}\big]
\int_0^1 e^{-\lambda r}\Big[r^{-\frac12}+\frac{\rho(\sqrt{r})^{\frac12}}{r}\Big]dr
\le C\big[\|g\|_{2,\rho}+\|f\|_{2,\rho}\big].
\end{equation}

Combining \eqref{2.7}, \eqref{2.19}, \eqref{2.20} and \eqref{2.21}, we obtain
\begin{equation}\label{2.22}
\|\partial_t u^m\|_{2,0}
\le C\big[\|g\|_{2,\rho}+\|f\|_{2,\rho}\big].
\end{equation}
Using \eqref{2.19}--\eqref{2.22}, \eqref{2.9} and standard compactness arguments,
there exist a subsequence (not relabelled) and a measurable $u\in\mathcal{X}$
such that $u^m\to u$ a.e.\ on $[0,1]\times\mathbb{R}^d$ as $m\to\infty$.
In particular, $u$ solves \eqref{2.3} and satisfies \eqref{2.4}.

\smallskip
Finally, we prove uniqueness. By linearity, it suffices to consider the homogeneous problem,
that is, with $f\equiv 0$. Repeating the above estimates for $u$ (in place of $u^m$) yields (see \eqref{2.20}), in particular, $\|u\|_{\infty,0}=0$, and the uniqueness follows.
\end{proof}

We now recall two auxiliary lemmas from stochastic analysis,
which will be used in the proof of the strong error estimates for numerical
approximations.
\begin{lemma}\label{lem2.4}(\cite[Lemma 2.2]{LL})
Let $\varepsilon>0$ and let $v,S,T,C_1,C_2,C_3,\Gamma_1,\Gamma_2\ge 0$ be fixed constants such that
$0\le v<S<T$.
Let $w$ be a continuous deterministic control on $\Delta([S,T])$.
Let $J$ be an $L^p$-integrable adapted process indexed by $\Delta([S,T])$ such that,
for every $(s,u,t)\in\Delta_2([S,T])$,
\begin{equation*}
\begin{aligned}
&\qquad \quad\|J_{s,t}\|_{L^p(\Omega|\mathcal{F}_v)}
\le C_2 w(s,t)^{\frac12+\varepsilon}, \quad \|\mathbb{E}_s J_{s,t}\|_{L^p(\Omega|\mathcal{F}_v)}
\le C_1 w(s,t)^{1+\varepsilon}, \\
& \|\delta J_{s,u,t}\|_{L^p(\Omega|\mathcal{F}_v)}
\le \Gamma_2 w(s,t)^{\frac12}
+ C_3\Gamma_2 w(s,t)^{\frac12+\varepsilon}, \quad
\|\mathbb{E}_s \delta J_{s,u,t}\|_{L^p(\Omega|\mathcal{F}_v)}
\le \Gamma_1 w(s,t)^{1+\varepsilon}.
\end{aligned}
\end{equation*}
Then there exists a constant $N=N(\varepsilon,p)$, in particular independent of
$\Gamma_1,\Gamma_2,C_1,C_2,C_3,S,T,v$ and $w$, such that for every $(s,t)\in\Delta([S,T])$,
\begin{equation*}
\begin{aligned}
\|J_{s,t}\|_{L^p(\Omega|\mathcal{F}_v)}
\le {}&
N\Gamma_2\Big[(1+|\log\Gamma_2|)w(s,t)^{\frac12}
+ C_1 w(s,t)^{1+\varepsilon}
+ (C_2+C_3) w(s,t)^{\frac12+\varepsilon}\Big] + N\Gamma_1 w(s,t)^{1+\varepsilon}.
\end{aligned}
\end{equation*}
\end{lemma}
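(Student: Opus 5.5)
The plan is the dyadic proof of the stochastic sewing lemma, run up to a finite depth $k$ chosen in terms of $\Gamma_2$ rather than letting $k\to\infty$. The logarithm arises because the critical term $\Gamma_2 w^{1/2}$ in the bound on $\delta J$ does not decay from one dyadic level to the next.

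\textbf{Step 1 (partitions).} Fix $(s,t)\in\Delta([S,T])$. I would build nested partitions $\pi_j=\{t^j_i\}_{i=0}^{2^j}$ of $[s,t]$ by repeated bisection adapted to $w$. Given an interval $[a,c]$, continuity of $w$ and the intermediate value theorem give $b\in[a,c]$ with $w(a,b)=w(b,c)$. Superadditivity then gives $w(a,b)=w(b,c)\le \tfrac12 w(a,c)$. Hence every interval of $\pi_j$ has control at most $2^{-j}w(s,t)$, and the sum of $w$ over the intervals of $\pi_j$ is at most $w(s,t)$. Writing $J^{j}:=\sum_{[a,c]\in\pi_j}J_{a,c}$, the algebraic telescoping identity is
\[
J_{s,t}=J^{k}+\sum_{j=0}^{k-1}\big(J^{j}-J^{j+1}\big),\qquad J^j-J^{j+1}=\sum_{[a,c]\in\pi_j}\delta J_{a,b,c},
\]
where $b$ denotes the midpoint chosen for $[a,c]$.

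\textbf{Step 2 (estimating each level).} Each sum is split into a conditional-expectation part and a martingale-difference part. For example, write $\delta J_{a,b,c}=\mathbb{E}_a\delta J_{a,b,c}+(\delta J_{a,b,c}-\mathbb{E}_a\delta J_{a,b,c})$. Since $v<S\le a$, I would apply the conditional BDG inequality given $\mathcal{F}_v$ to the martingale parts, grouping even- and odd-indexed intervals so that the terms are genuine martingale differences.

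At level $j<k$:
\begin{itemize}
\item The expectation part is bounded by $\Gamma_1\sum w(a,c)^{1+\varepsilon}\le \Gamma_1 2^{-j\varepsilon}w(s,t)^{1+\varepsilon}$.
\item The martingale part is bounded by a multiple of $\big(\sum (\Gamma_2 w(a,c)^{1/2}+C_3\Gamma_2 w(a,c)^{1/2+\varepsilon})^2\big)^{1/2}\lesssim \Gamma_2 w(s,t)^{1/2}+C_3\Gamma_2 2^{-j\varepsilon}w(s,t)^{1/2+\varepsilon}$.
\end{itemize}
Summing over $j<k$ gives
\[
N\big[k\Gamma_2 w^{1/2}+C_3\Gamma_2 w^{1/2+\varepsilon}+\Gamma_1 w^{1+\varepsilon}\big].
\]

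For the top level $J^k$, the same splitting applied to $J_{a,c}$ yields:
\begin{itemize}
\item an expectation part bounded by $C_1 w(s,t)^{1+\varepsilon}$;
\item a martingale part bounded by $C_2\big(\sum w(a,c)^{1+2\varepsilon}\big)^{1/2}\le C_2 2^{-k\varepsilon}w(s,t)^{1/2+\varepsilon}$.
\end{itemize}

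\textbf{Step 3 (choice of depth).} When $\Gamma_2<1$, I would take $k\simeq \varepsilon^{-1}\log_2(1/\Gamma_2)$, so that $2^{-k\varepsilon}\le\Gamma_2$. The martingale part of $J^k$ then becomes $C_2\Gamma_2 w^{1/2+\varepsilon}$, and $k\Gamma_2 w^{1/2}\lesssim(1+|\log\Gamma_2|)\Gamma_2 w^{1/2}$, which is the form of the claimed bound.

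The term $C_1 w^{1+\varepsilon}$ is a gap: the lemma states it as $C_1\Gamma_2 w^{1+\varepsilon}$, but the plain estimate does not carry the factor $\Gamma_2$. I do not yet have an argument that produces it. A first attempt would be to exploit the defect relation $\mathbb{E}_a J_{a,c}=\mathbb{E}_a J_{a,b}+\mathbb{E}_a J_{b,c}+\mathbb{E}_a\delta J_{a,b,c}$ iterated over levels, but it is not clear that this yields a factor $\Gamma_2$. Otherwise I would return to \cite{LL} for the exact normalization. The same issue arises in the regime $\Gamma_2\ge1$: there I would take $k$ with $2^{-k\varepsilon}\simeq 1$ (say $k=0$), which gives $C_2 w^{1/2+\varepsilon}\le \Gamma_2 C_2 w^{1/2+\varepsilon}$ and $\Gamma_2 w^{1/2}\le\Gamma_2(1+|\log\Gamma_2|)w^{1/2}$. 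The $C_1$ term is again not obviously multiplied by $\Gamma_2$, so this case also depends on resolving the normalization.

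\textbf{Main obstacle.} Aside from the $C_1$ normalization, the main technical obstacle is the critical martingale estimate at each level. The bound $\|\delta J_{a,b,c}\|\le\Gamma_2 w(a,c)^{1/2}$ has no excess exponent, so BDG only gives $(\sum w(a,c))^{1/2}\le w(s,t)^{1/2}$ per level, with no geometric decay. The series over levels therefore cannot be summed, and the argument must be cut at the finite depth $k$ of Step 3. The delicate points are to keep the constant $N$ independent of $w$, $S$, $T$ and $v$, which the control-adapted bisection of Step 1 guarantees, and to carry out the conditional BDG argument relative to $\mathcal{F}_v$ uniformly.
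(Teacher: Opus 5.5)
Your approach is the right one, and the obstacle you flag does not exist. It comes from an estimate in your own top-level step that drops a decay factor. Your bisection gives $w(a,c)\le 2^{-k}w(s,t)$ for every $[a,c]\in\pi_k$ and $\sum_{[a,c]\in\pi_k}w(a,c)\le w(s,t)$. So the expectation part of $J^k$ decays exactly like its martingale part:
\[
\Big\|\sum_{[a,c]\in\pi_k}\mathbb{E}_aJ_{a,c}\Big\|_{L^p(\Omega|\mathcal{F}_v)}
\le C_1\sum_{[a,c]\in\pi_k}w(a,c)^{1+\varepsilon}
\le C_1\,2^{-k\varepsilon}\,w(s,t)^{1+\varepsilon}.
\]
This is the same computation you already did for the $\Gamma_1$-terms at level $j$.

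\textbf{Case $0<\Gamma_2<1$.} Take $k=\lceil\varepsilon^{-1}\log_2(1/\Gamma_2)\rceil$. Then $2^{-k\varepsilon}\le\Gamma_2$ and $k\le 1+\varepsilon^{-1}\log_2(1/\Gamma_2)$. The term above is therefore at most $C_1\Gamma_2 w(s,t)^{1+\varepsilon}$. Combined with your other estimates, this gives the bound in exactly the stated normalization. The constant $N$ depends only on the BDG constant, $(1-2^{-\varepsilon})^{-1}$ and $\varepsilon^{-1}$.

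\textbf{Case $\Gamma_2\ge1$.} This is trivial. The first hypothesis alone gives $\|J_{s,t}\|_{L^p(\Omega|\mathcal{F}_v)}\le C_2w(s,t)^{1/2+\varepsilon}\le \Gamma_2C_2w(s,t)^{1/2+\varepsilon}$. The $C_1$-term in the conclusion is just an extra nonnegative summand, so nothing has to produce it. If you do split, $C_1w^{1+\varepsilon}\le\Gamma_2C_1w^{1+\varepsilon}$ anyway.

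\textbf{Case $\Gamma_2=0$.} The hypotheses hold with any $\eta>0$ in place of $\Gamma_2$. Letting $\eta\downarrow0$ and using $\eta|\log\eta|\to0$ gives $\|J_{s,t}\|_{L^p(\Omega|\mathcal{F}_v)}\le N\Gamma_1w(s,t)^{1+\varepsilon}$.

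With this correction, your finite-depth dyadic argument is a complete proof. The paper gives no proof of its own and simply imports the lemma from L\^{e} and Ling. Your argument is the standard way such logarithmic critical sewing bounds are obtained.

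Two minor points. First, the even/odd grouping is unnecessary. Consecutive intervals of $\pi_j$ share endpoints, and each $\delta J_{a,b,c}-\mathbb{E}_a\delta J_{a,b,c}$ is $\mathcal{F}_c$-measurable with vanishing $\mathcal{F}_a$-conditional mean. So these variables already form a martingale difference sequence. Second, passing from the conditional BDG inequality to the bound $\big(\sum_i\|\xi_i\|^2_{L^p(\Omega|\mathcal{F}_v)}\big)^{1/2}$ for martingale differences $\xi_i$ uses the conditional Minkowski inequality in $L^{p/2}$. This requires $p\ge2$, which holds in the paper's application.
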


\begin{lemma}\label{lem2.5}(\cite[Lemma~3.1]{LL})
Let $A=(A_t)_{t\in[0,1]}$ be a continuous adapted stochastic process, and let
$p,N\in(0,\infty)$ be fixed constants. Assume that $A_0=0$ and
\begin{equation*}
\sup_{0\le s\le t\le 1}\|\delta A_{s,t}\|_{L^p(\Omega|\mathcal{F}_s)}\le N.
\end{equation*}
Then for every $\bar p\in(0,p)$, there exists a constant $C(\bar p,p)$ such that
\begin{equation*}
\Big\|\sup_{0\le t\le 1}|A_t|\Big\|_{L^{\bar p}(\Omega)}
\le C(\bar p,p)N.
\end{equation*}
\end{lemma}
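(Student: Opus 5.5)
The plan is a weak-type tail estimate for $\sup_t|A_t|$, obtained from a stopping-time argument, followed by integration of the tail to pass from weak $L^p$ to $L^{\bar p}$ with $\bar p<p$. Here $\delta A_{s,t}=A_t-A_s$. The hypothesis says that, conditionally on $\mathcal F_s$, the increment after a deterministic time $s$ has $p$-th moment at most $N^p$. I would first extend this to stopping times and then use a reflection-type splitting.

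\emph{Step 1 (increments after stopping times).} Let $\tau$ be a stopping time with values in $[0,1]$. Set $\tau_k:=\lceil 2^k\tau\rceil 2^{-k}\wedge 1$, which takes finitely many dyadic values and satisfies $\tau_k\ge\tau$. On each event $\{\tau_k=s\}$ (which lies in $\mathcal F_s$) we have
\[
\mathbb E\big[|A_1-A_{\tau_k}|^p 1_{\{\tau_k=s\}}\big]=\mathbb E\big[1_{\{\tau_k=s\}}\mathbb E_s|A_1-A_s|^p\big]\le N^p\,\mathbb P(\tau_k=s).
\]
For any $G\in\mathcal F_\tau\subset\mathcal F_{\tau_k}$, the same computation with $G\cap\{\tau_k=s\}$ gives $\mathbb E[|A_1-A_{\tau_k}|^p1_G]\le N^p\mathbb P(G)$. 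Since $A$ is continuous, $A_{\tau_k}\to A_\tau$. Fatou's lemma then yields $\mathbb E[|A_1-A_\tau|^p1_G]\le N^p\mathbb P(G)$.

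\emph{Step 2 (tail bound).} Fix $\lambda>0$ and let $\tau:=\inf\{t:|A_t|\ge 2\lambda\}\wedge 1$. By continuity and $A_0=0$, on $\{\sup_t|A_t|\ge2\lambda\}$ we have $|A_\tau|=2\lambda$ and $\{\tau\text{ attained}\}\in\mathcal F_\tau$. If moreover $|A_1|\le\lambda$, then $|A_1-A_\tau|\ge\lambda$. Hence, by Chebyshev and Step 1 (with $s=0$ giving $\mathbb E|A_1|^p\le N^p$),
\[
\mathbb P\Big(\sup_{t}|A_t|\ge2\lambda\Big)\le \mathbb P(|A_1|>\lambda)+\lambda^{-p}\,\mathbb E\big[|A_1-A_\tau|^p1_{\{\sup|A|\ge 2\lambda\}}\big]\le 2N^p\lambda^{-p}.
\]

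\emph{Step 3 (integration).} Write $\mathbb E\sup|A|^{\bar p}=\int_0^\infty \bar p\mu^{\bar p-1}\mathbb P(\sup|A|\ge\mu)\,d\mu$. Bound the probability by $1$ for $\mu\le N$ and by $2^{p+1}N^p\mu^{-p}$ for $\mu>N$. Since $\bar p<p$, both pieces are finite and equal to $C(\bar p,p)N^{\bar p}$, which gives the claim.

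The only delicate point is Step 1: the hypothesis is stated at deterministic times, so the transfer to stopping times via dyadic approximation and continuity must be checked. That is also where the adaptedness and continuity of $A$ are used.
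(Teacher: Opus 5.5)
Your proof is correct and complete. The paper gives no argument of its own for this lemma; it is quoted from L\^{e}--Ling \cite[Lemma~3.1]{LL}. Your route is the standard one for statements of this type:
\begin{itemize}
\item extend the conditional moment bound to stopping times by dyadic approximation and Fatou;
\item obtain the weak-type estimate $\mathbb{P}(\sup_t|A_t|\ge 2\lambda)\le 2N^p\lambda^{-p}$ from the hitting time of level $2\lambda$;
\item integrate with the layer-cake formula, which is exactly where $\bar p<p$ is needed.
\end{itemize}
It works for every $p\in(0,\infty)$, since you only take conditional expectations of $|A_1-A_s|^p\ge 0$ and never of $A$ itself.

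A minor simplification in Step~2: you do not need $\{\sup_t|A_t|\ge 2\lambda\}\in\mathcal{F}_\tau$. Off this event $\tau=1$ and $A_1-A_\tau=0$, so Step~1 with $G=\Omega$ already gives $\mathbb{P}(|A_1-A_\tau|\ge\lambda)\le N^p\lambda^{-p}$. In any case, your phrase about ``$\tau$ attained'' should read $\{\sup_t|A_t|\ge 2\lambda\}=\{|A_\tau|\ge 2\lambda\}$, which lies in $\mathcal{F}_\tau$ because $A_\tau$ is $\mathcal{F}_\tau$-measurable.
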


\section{Proof of Theorem \ref{the1.2}}\label{sec3}\setcounter{equation}{0}
Fix $\lambda>0$ large enough. Consider the backward Cauchy problem
\begin{equation}\label{3.1}
\left\{
\begin{aligned}
\partial_t U(t,x)
&+\tfrac12\sum_{i,j=1}^d a_{i,j}(t,x)\partial^2_{x_i,x_j}U(t,x)
+b(t,x)\cdot\nabla U(t,x)
\\
&=\lambda U(t,x)-b(t,x),
\qquad (t,x)\in[0,1)\times\mathbb{R}^d,\\
U(1,x)&=0,\qquad x\in\mathbb{R}^d.
\end{aligned}
\right.
\end{equation}
By Theorem~\ref{the2.3}, there exists a unique solution $U\in\mathcal{X}^d$ of \eqref{3.1}, where
\begin{equation*}
\mathcal{X}^d
:=\Big\{V\in L^\infty([0,1];W^{1,\infty}(\mathbb{R}^d;\mathbb{R}^d))
:\ |\partial_tV|, \ |\nabla^2V| \in L^2([0,1];L^\infty(\mathbb{R}^d))\Big\}.
\end{equation*}
Moreover, \eqref{2.4} holds for $U$. Define $\Phi(t,x):=x+U(t,x)$ and $\Psi(t,\cdot):=\Phi(t,\cdot)^{-1}$.
Then $\Phi(t,\cdot)$ is a bi-Lipschitz homeomorphism uniformly in $t\in[0,1]$, and
\begin{equation}\label{3.2}
\tfrac12\le \sup_{0\le t\le 1}\|\nabla \Phi(t,\cdot)\|_0\le \tfrac32,
\qquad
\tfrac23\le \sup_{0\le t\le 1}\|\nabla \Psi(t,\cdot)\|_0\le 2.
\end{equation}
For $0<\varepsilon<1$, define the time average
\begin{equation*}
U_\varepsilon(t,x):=\varepsilon^{-1}\int_t^{t+\varepsilon}U(r,x)dr
=\int_0^1 U(t+\varepsilon r,x)dr,
\end{equation*}
and set $\Phi_\varepsilon(t,x):=x+U_\varepsilon(t,x)$, with the convention $U(t,x)=0$ for $t>1$.

\smallskip
Let $X_{s,t}(x)$ be a strong solution to \eqref{1.1}. Applying It\^{o}'s formula
(cf.\ \cite[Theorem~3.7]{KR}) to $\Phi_\varepsilon(t,X_{s,t}(x))$ and then letting
$\varepsilon\downarrow0$ yields
\begin{equation}\label{3.3}
\Phi(t,X_{s,t}(x))
=\Phi(s,x)
+\lambda\int_s^t U\bigl(r,X_{s,r}(x)\bigr)dr
+\int_s^t \bigl[I+\nabla U\bigl(r,X_{s,r}(x)\bigr)\bigr]\sigma\bigl(r,X_{s,r}(x)\bigr)dW_r .
\end{equation}

Denote $Y_{s,t}(y):=\Phi\bigl(t,X_{s,t}(x)\bigr)$ and $y:=\Phi(s,x)$. Then $Y_{s,t}(y)$ solves the SDE
\begin{equation}\label{3.4}
dY_{s,t}(y)=\tilde b\bigl(t,Y_{s,t}(y)\bigr)dt
+\tilde\sigma\bigl(t,Y_{s,t}(y)\bigr)dW_t,
\quad t\in(s,1],\quad
Y_{s,s}(y)=y,
\end{equation}
where, for $(t,z)\in[0,1]\times\mathbb{R}^d$,
\begin{equation*}
\tilde b(t,z):=\lambda U\bigl(t,\Psi(t,z)\bigr),
\qquad
\tilde\sigma(t,z):=\bigl[I+\nabla U\bigl(t,\Psi(t,z)\bigr)\bigr]
\sigma\bigl(t,\Psi(t,z)\bigr).
\end{equation*}
Conversely, if $Y_{s,t}(y)$ is a strong solution of \eqref{3.4}, then by \eqref{3.2} and It\^{o}'s formula,
$X_{s,t}(x):=\Psi\bigl(t,Y_{s,t}(y)\bigr)$
is a strong solution of \eqref{1.1}. Hence \eqref{1.1} and \eqref{3.4} are equivalent.

\smallskip
The regularity of $U$ and $\sigma$ implies that
$\tilde b\in L^\infty([0,1];\mathrm{Lip}(\mathbb{R}^d;\mathbb{R}^d))$
and
$\tilde\sigma\in L^2([0,1];W^{1,\infty}(\mathbb{R}^d;\mathbb{R}^{d\times d}))$.
Therefore, by the Cauchy--Lipschitz (Picard--Lindel\"of) theorem,
for every $0\le s\le 1$ and $y\in\mathbb{R}^d$ there exists a unique strong solution
$Y_{s,t}(y)$ to \eqref{3.4}, and the flow property holds: $Y_{s,t}(y)=Y_{r,t}\bigl(Y_{s,r}(y)\bigr)$ for every $0\le s\le r\le t\le 1$.

\smallskip
Applying It\^{o}'s formula to $|Y_{s,t}(y)|^p$ yields, for any $p\ge 2$,
\begin{equation*}
d|Y_{s,t}(y)|^p
\le C\bigl(1+|Y_{s,t}(y)|^p\bigr)dt
+p|Y_{s,t}(y)|^{p-2}\big\langle Y_{s,t}(y),\tilde\sigma(t,Y_{s,t}(y))dW_t\big\rangle.
\end{equation*}
Hence,
\begin{equation}\label{3.5}
\sup_{s\le t\le 1}\mathbb{E}|Y_{s,t}(y)|^p \le C\bigl(1+|y|^p\bigr),
\end{equation}
where we used the fact the stochastic integral is a martingale with zero expectation.

\smallskip
To verify that $\{Y_{s,t}(\cdot),t\in[s,1]\}$ is a stochastic flow of homeomorphisms,
it suffices (cf.\ \cite[Lemmas~II.2.4, II.4.1, II.4.2]{Kun84}) to show that, for all
$y,y'\in\mathbb{R}^d$ and $s<t$, $s'<t'$,
\begin{equation}\label{3.6}
\sup_{s\le t\le 1}\mathbb{E}\bigl|Y_{s,t}(y)-Y_{s,t}(y')\bigr|^{2\xi}
\le C|y-y'|^{2\xi},
\qquad \text{for all }\xi<0,
\end{equation}
and
\begin{equation}\label{3.7}
\mathbb{E}\bigl|Y_{s,t}(y)-Y_{s',t'}(y')\bigr|^p
\le C\Big\{|y-y'|^p
+|s-s'|^{\tfrac{p}{2}}+|t-t'|^{\tfrac{p}{2}}\Big\},
\quad p\ge 2.
\end{equation}

We first prove \eqref{3.6}. Fix $\xi<0$ and $\varepsilon>0$, and set
$f_\varepsilon(x):=\varepsilon+|x|^2$ and $\Delta Y_{s,t}:=Y_{s,t}(y)-Y_{s,t}(y')$.
Applying It\^{o}'s formula to $f_\varepsilon(\Delta Y_{s,t})^\xi$ gives
\begin{equation}\label{3.8}
\begin{aligned}
f_\varepsilon(\Delta Y_{s,t})^\xi
\le& f_\varepsilon(y-y')^\xi
+ C|\xi|\int_s^t f_\varepsilon(\Delta Y_{s,r})^\xi dr
+ C|\xi(\xi-1)|\int_s^t \psi(r)f_\varepsilon(\Delta Y_{s,r})^\xi dr \\
&\quad
+2\xi\int_s^t f_\varepsilon(\Delta Y_{s,r})^{\xi-1}
\Big\langle \Delta Y_{s,r},
\big(\tilde\sigma(r,Y_{s,r}(y))-\tilde\sigma(r,Y_{s,r}(y'))\big)dW_r\Big\rangle,
\end{aligned}
\end{equation}
where $\psi(r):=\|\nabla^2 U(r)\|_\infty^2\in L^1([0,1])$ (since $\nabla U\in L^2([0,1];W^{1,\infty}(\mathbb{R}^d;\mathbb{R}^{d\times d}))$).
Taking expectations, using that the stochastic integral has mean zero, and applying Gr\"onwall's inequality, we obtain
\begin{equation*}
\sup_{s\le t\le 1}\mathbb{E}f_\varepsilon(\Delta Y_{s,t})^\xi
\le Cf_\varepsilon(y-y')^\xi.
\end{equation*}
The desired result \eqref{3.6} follows by letting $\varepsilon\downarrow0$.

\smallskip
We next prove \eqref{3.7}. Without loss of generality, assume $s<s'<t<t'$.
Then
\begin{equation}\label{3.9}
|Y_{s,t}(y)-Y_{s',t'}(y')|^p
\le 3^{p-1}\Big(
|Y_{s,t}(y)-Y_{s,t}(y')|^p
+|Y_{s,t}(y')-Y_{s',t}(y')|^p
+|Y_{s',t}(y')-Y_{s',t'}(y')|^p
\Big).
\end{equation}
The first term is controlled by the Lipschitz dependence on initial data:
\begin{equation}\label{3.10}
\sup_{s\le t\le 1}\mathbb{E}|Y_{s,t}(y)-Y_{s,t}(y')|^p \le C|y-y'|^p.
\end{equation}
For the second and third terms, standard moment estimates for Lipschitz SDEs give
\begin{equation}\label{3.11}
\mathbb{E}|Y_{s,t}(y')-Y_{s',t}(y')|^p
\le C|s-s'|^{\frac p2},\quad
\mathbb{E}|Y_{s',t}(y')-Y_{s',t'}(y')|^p
\le C|t-t'|^{\frac p2}.
\end{equation}
Combining \eqref{3.9}--\eqref{3.11} yields \eqref{3.7}.

\smallskip
Therefore, $Y_{s,t}(\cdot)$ is a homeomorphism for every $0\le s\le t\le 1$.
Finally, since
\[
Y_{s,t}(Y_{s,t}^{-1}(y))=y
\]
and
\begin{equation*}
Y_{s,t}(Y_{s,t}^{-1}(y))
=Y_{s,t}^{-1}(y)
+\int_s^t \tilde b\bigl(r,Y_{s,r}(Y_{s,t}^{-1}(y))\bigr)dr
+\int_s^t \tilde\sigma\bigl(r,Y_{s,r}(Y_{s,t}^{-1}(y))\bigr)dW_r,
\end{equation*}
we have
\[
Y_{s,r}(Y_{s,t}^{-1}(y))=Y_{r,t}^{-1}(y)
\]
and thus
\begin{equation}\label{3.12}
Y_{s,t}^{-1}(y)
= y
-\int_s^t \tilde b\bigl(r,Y_{r,t}^{-1}(y)\bigr)dr
-\int_s^t \tilde\sigma\bigl(r,Y_{r,t}^{-1}(y)\bigr)dW_r.
\end{equation}
The same estimate as in \eqref{3.7} (applied to the backward representation \eqref{3.12})
yields the continuity of $Y_{s,t}^{-1}(y)$ in $(s,t,y)$, $\mathbb{P}$-a.s.
Hence $\{Y_{s,t}(\cdot),t\in[s,1]\}$ forms a stochastic flow of homeomorphisms for \eqref{3.4}.

\smallskip
We now return to the original process $X_{s,t}(x)$. Recall that
\[
X_{s,t}(x)
:=\Psi\bigl(t,Y_{s,t}(\Phi(s,x))\bigr), \quad 0\le s\le t\le 1,\ \ x\in\mathbb{R}^d.
\]
Since $\Phi(t,\cdot)$ and $\Psi(t,\cdot)$ are bi-Lipschitz homeomorphisms
uniformly in $t\in[0,1]$,
and since $\{Y_{s,t}(\cdot)\}_{0\le s\le t\le 1}$ is a stochastic flow of homeomorphisms,
it follows immediately that $\{X_{s,t}(\cdot)\}_{0\le s\le t\le 1}$
is also a family of homeomorphisms on $\mathbb{R}^d$ associated with \eqref{1.1}. This completes the proof of Theorem \ref{the1.2}. \qed

\section{Proof of Theorem \ref{the1.4}}\label{sec4}
\setcounter{equation}{0}
Let $\mathbb{D}_n:=\{k/n:\ k=0,1,\dots,n\}$.
For each $s\in\mathbb{D}_n$ and $x\in\mathbb{R}^d$, let $\bar X^{n}_{s,\cdot}(x)$ be the solution of the following Euler--Maruyama scheme
\begin{equation}\label{4.1}
\bar X^{n}_{s,t}(x)
=
x+\int_s^t \sigma\big(r,\bar X^{n}_{s,k_n(r)}(x)\big)dW_r,
\qquad t\in[s,1].
\end{equation}
When $s=0$, we write $\bar X^n_t(x):=\bar X^{n}_{0,t}(x)$, and if the starting point is clear we simply write $\bar X^n_t$.

\begin{lemma}\label{lem4.1} Suppose that $a:=\sigma\sigma^{\mathsf T}$ satisfies \eqref{1.5}--\eqref{1.6} with $\alpha=1$. Let $\bar X^n$ be given by \eqref{4.1}.
Assume that
\[
f\in L^2([0,1];\cC_b(\mathbb{R}^d)), \quad g\in L^\infty([0,1]\times\mathbb{R}^d)\cap L^2([0,1];W^{1,\infty}(\mathbb{R}^d)).
\]
Then, for every $t\in[0,1]$, every $p\ge 2$, and every integer $n\ge 2$, we have
\begin{equation}\label{4.2}
\Big\|
\sup_{0\le r\le t}\Big|
\int_0^r g(\tau,\bar X_\tau^n)\Big(f(\tau,\bar X_\tau^n)-f(\tau,\bar X_{k_n(\tau)}^n)\Big)d\tau
\Big|
\Big\|_{L^p(\Omega)}
\le
Cn^{-\frac12}\log(n)^{\frac32},
\end{equation}
where $C$ depends only on $p$, $d$,
$\|\nabla a\|_{\infty,\infty}, \|g\|_{\infty,\infty}, \|\nabla g\|_{2,\infty}$ and $\|f\|_{2,0}$.
\end{lemma}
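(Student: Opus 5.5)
We will prove \eqref{4.2} by the stochastic sewing lemma (Lemma~\ref{lem2.4}) combined with Lemma~\ref{lem2.5}, following the strategy of L\^{e} and Ling~\cite{LL}. Let
\[
A_t:=\int_0^t g(\tau,\bar X^n_\tau)\big(f(\tau,\bar X^n_\tau)-f(\tau,\bar X^n_{k_n(\tau)})\big)d\tau .
\]
By Lemma~\ref{lem2.5} it suffices to bound $\|A_t-A_s\|_{L^{p'}(\Omega|\mathcal{F}_s)}$ uniformly in $0\le s\le t\le 1$, for some $p'>p$, by $Cn^{-1/2}\log(n)^{3/2}$.

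\textbf{Germ and easy bounds.} For $(s,t)$ we take the germ
\[
J_{s,t}:=\mathbb{E}_{s}\int_s^t g(\tau,\bar X^n_\tau)\big(f(\tau,\bar X^n_\tau)-f(\tau,\bar X^n_{k_n(\tau)})\big)d\tau ,
\]
or, to keep adaptedness to the sewing framework, the frozen version in which $\bar X^n$ is replaced by the Euler chain started from $\bar X^n_{s}$ at time $s$ (when $s$ lies inside a grid interval one first moves to the next grid point; the remainder on an interval of length $\le 1/n$ is bounded trivially by $2\|g\|_{\infty}\int_s^{s+1/n}\|f(\tau)\|_0d\tau\le Cn^{-1/2}$). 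The control is $w(s,t)=\int_s^t(1+\|f(\tau)\|_0^2+\|\nabla g(\tau)\|_\infty^2)d\tau$, up to the constant $(t-s)$ factors. The trivial bound $\|J_{s,t}\|\le C\int_s^t\|f(\tau)\|_0 d\tau\le C w(s,t)^{1/2}$ gives the required size condition.

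\textbf{Conditional expectation via heat kernel.} The core estimate is on $\mathbb{E}_s$ of the integrand for $\tau\ge s+2/n$. Conditionally on $\mathcal{F}_{k_n(\tau)}$, the increment $\bar X^n_\tau-\bar X^n_{k_n(\tau)}$ is Gaussian with covariance $\int_{k_n(\tau)}^\tau a(r,\bar X^n_{k_n(\tau)})dr$, which is uniformly nondegenerate by \eqref{1.5}. We write $g(\tau,\bar X^n_\tau)=g(\tau,\bar X^n_{k_n(\tau)})+O(\|\nabla g(\tau)\|_\infty|\bar X^n_\tau-\bar X^n_{k_n(\tau)}|)$. The second piece is of size $n^{-1/2}\|\nabla g(\tau)\|_\infty\|f(\tau)\|_0$, which integrates by Cauchy--Schwarz to $Cn^{-1/2}$. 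In the first piece we then need to bound
\[
\mathbb{E}_s\Big[g(\tau,\bar X^n_{k_n(\tau)})\big(P^{(n)}f(\tau,\bar X^n_{k_n(\tau)})-f(\tau,\bar X^n_{k_n(\tau)})\big)\Big],
\]
where $P^{(n)}$ is the one-step Gaussian kernel with frozen covariance. Using the density of the Euler chain from $s$ to $k_n(\tau)$, which satisfies Gaussian bounds and gradient bounds of order $(k_n(\tau)-s)^{-1/2}$ (this is where the Lipschitz property of $a$, i.e.\ $\alpha=1$, and a discrete Aronson estimate in the spirit of Lemma~\ref{lem2.2} are needed), we move the difference $P^{(n)}-I$ onto the density. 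This yields a bound $C\|f(\tau)\|_0\, n^{-1/2}(\tau-s)^{-1/2}$, with additional contributions from the dependence of the frozen covariance on the base point, controlled through $\|\nabla a\|_{\infty,\infty}$. Integrating over $\tau$, we obtain
\[
\|\mathbb{E}_sJ_{s,t}\|\lesssim n^{-1/2}w(s,t)^{1/2}\ (\text{and thus } \lesssim w^{1+\varepsilon}\text{ after rescaling}),
\]
which is the source of the $n^{-1/2}$ factor.

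\textbf{Sewing and the logarithm.} The quantity $\delta J_{s,u,t}$ reflects the difference between conditioning at $s$ and at $u$, together with the discrepancy between the frozen chain and $\bar X^n$. By the Lipschitz dependence of the Euler chain on its initial point and the same kernel estimates, we obtain $\|\delta J\|\le\Gamma_2 w^{1/2}+\dots$ with $\Gamma_2\sim n^{-1/2}$ and $\|\mathbb{E}_s\delta J\|\le\Gamma_1 w^{1+\varepsilon}$ with $\Gamma_1\sim n^{-1/2}$. Lemma~\ref{lem2.4} then gives $\Gamma_2(1+|\log\Gamma_2|)\sim n^{-1/2}\log n$. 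The remaining power $\log(n)^{1/2}$ should come from the near-diagonal regime $\tau-s\lesssim 1/n$ and from summing the gradient kernel singularity over grid points, $\sum_k (k/n)^{-1}n^{-1}\sim\log n$, with the square root appearing after H\"older's inequality against $f\in L^2_t$. We expect this main obstacle, namely the uniform discrete heat-kernel gradient bounds for the Euler chain with variable Lipschitz $\sigma$ and the precise bookkeeping yielding exactly $\log(n)^{3/2}$, to be handled by citing the corresponding estimates of~\cite{LL}.
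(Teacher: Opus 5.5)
Your overall architecture matches the paper's: reduction via Lemma~\ref{lem2.5}, then the Markov property plus Gaussian kernel estimates giving a gain $\|f(\tau)\|_0n^{-1/2}(\tau-s)^{-1/2}$, then the logarithmic sewing Lemma~\ref{lem2.4}. Freezing $g$ pathwise at $\bar X^n_{k_n(\tau)}$ is a fine substitute for the paper's $I^{2,n}$ term. Duality against the Euler density is an acceptable alternative to the parametrix identity \eqref{4.10}, provided your citation really yields the forward-variable bound $\int_{\mathbb{R}^d}|\nabla_y p^n(s,x;k_n(\tau),y)|dy\lesssim(k_n(\tau)-s)^{-1/2}$ for Lipschitz $a$. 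The paper sidesteps this by comparing $Q^n$ with the explicit frozen Gaussians $T$.

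The genuine gap is in the sewing step. Your germ is $J_{s,t}=\mathbb{E}_s(\mathcal{A}_t-\mathcal{A}_s)$, i.e.\ the paper's $A_{s,t}$. For this germ $\mathbb{E}_sJ_{s,t}=J_{s,t}$, so the hypothesis $\|\mathbb{E}_sJ_{s,t}\|\le C_1w(s,t)^{1+\varepsilon}$ would require superlinear decay of $J$ itself. You only have order $w^{1/2}$, and no rescaling of a control turns $w^{1/2}$ into $w^{1+\varepsilon}$ as $w\to0$. More fundamentally, Lemma~\ref{lem2.4} concludes a bound on $J$ itself. Applied to your germ it would only re-prove a bound on $\mathbb{E}_s(\mathcal{A}_t-\mathcal{A}_s)$, which you already have directly. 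What Lemma~\ref{lem2.5} needs is $\mathcal{A}_t-\mathcal{A}_s$. Its fluctuation part $\mathcal{A}_t-\mathcal{A}_s-\mathbb{E}_s(\mathcal{A}_t-\mathcal{A}_s)$ is the only part that actually requires sewing, and your plan never controls it. Your germ would suit a critical sewing lemma that bounds the remainder $(\mathcal{A}_t-\mathcal{A}_s)-J_{s,t}$, but that is not Lemma~\ref{lem2.4}.

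The repair is the paper's choice: apply Lemma~\ref{lem2.4} to $\tilde J_{s,t}:=\delta\mathcal{A}_{s,t}-A_{s,t}$, where $A_{s,t}:=\mathbb{E}_s\delta\mathcal{A}_{s,t}$.
\begin{itemize}
\item $\mathbb{E}_s\tilde J_{s,t}=0$ and $\mathbb{E}_s\delta\tilde J_{s,u,t}=0$, so $C_1=\Gamma_1=0$.
\item The trivial bound $\|\tilde J_{s,t}\|\le 4\|g\|_{\infty,\infty}\int_s^t\|f(\tau)\|_0d\tau$ gives the size condition.
\item Since $\delta\mathcal{A}$ is additive, $\delta\tilde J_{s,u,t}=-\delta A_{s,u,t}=\mathbb{E}_u\delta\mathcal{A}_{u,t}-\mathbb{E}_s\delta\mathcal{A}_{u,t}$, and this is controlled by your heat-kernel estimate for $A$.
\end{itemize}
That estimate is not $n^{-1/2}w^{1/2}$, however. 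Cauchy--Schwarz in $\int_{s+4/n}^t\|f(\tau)\|_0(\tau-s)^{-1/2}d\tau$ produces $\big(\int_{s+4/n}^t(\tau-s)^{-1}d\tau\big)^{1/2}\lesssim\log(n)^{1/2}$, which is not a power of any control. So $\|\delta\tilde J_{s,u,t}\|\lesssim n^{-1/2}\log(n)^{1/2}w^{1/2}+n^{-1/2}w$, i.e.\ $\Gamma_2\sim n^{-1/2}\log(n)^{1/2}$ rather than $n^{-1/2}$. The factor $1+|\log\Gamma_2|\sim\log n$ then gives $\|\tilde J_{s,t}\|\lesssim n^{-1/2}\log(n)^{3/2}$, and adding back $A_{s,t}$ bounds $\mathcal{A}_t-\mathcal{A}_s$. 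This is the precise origin of $\log(n)^{3/2}=\log(n)^{1/2}\cdot\log n$. The near-diagonal range $\tau-s\lesssim 1/n$ you point to contributes only $n^{-1/2}$.
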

\begin{proof} By Lemma~\ref{lem2.5}, it suffices to show that for every $0\le s\le t\le 1$,
\begin{equation}\label{4.3}
\bigg\|
\int_s^t g(r,\bar X_r^n)\Big(f(r,\bar X_r^n)-f(r,\bar X_{k_n(r)}^n)\Big)dr
\bigg\|_{L^p(\Omega\mid\mathcal{F}_s)}
\le
Cn^{-\frac12}\log(n)^{\frac32}.
\end{equation}
We assume without loss of generality that $n\ge4$.
If $t\le k_n(s)+4/n$, then \eqref{4.3} follows from the Cauchy--Schwarz:
\[
\bigg\|\int_s^t g(r,\bar X_r^n)\Big(f(r,\bar X_r^n)-f(r,\bar X_{k_n(r)}^n)\Big)dr\bigg\|_{L^p(\Omega\mid\mathcal{F}_s)}
\le
\|g\|_{\infty,\infty}\int_s^t 2\|f(r)\|_{0}dr
\le
C\|f\|_{2,0,[s,t]}n^{-\frac12}.
\]
Hence it remains to treat the case $t>k_n(s)+4/n$.

For $(s,t)\in[0,1]_\le^2$, define
\[
A_{s,t}
:=
\mathbb{E}_{s}
\int_s^t g(r,\bar X_r^n)\Big(f(r,\bar X_r^n)-f(r,\bar X_{k_n(r)}^n)\Big)dr.
\]
Let $0\le v<s$.
When $t\le k_n(s)+4/n$, we have
\begin{equation}\label{4.4}
\|A_{s,t}\|_{L^p(\Omega\mid\mathcal{F}_v)}
\le
4\|g\|_{\infty,\infty}\|f\|_{2,0,[s,t]}n^{-\frac12}.
\end{equation}
When $t>k_n(s)+4/n$, set $\bar s:=k_n(s)+1/n$ and write
\begin{equation}\label{4.5}
\begin{aligned}
\|A_{s,t}\|_{L^p(\Omega\mid\mathcal{F}_v)}
&\le
4\|g\|_{\infty,\infty}\|f\|_{2,0,[s,t]}n^{-\frac12}
\\
&\quad+
\int_{k_n(s)+\frac{4}{n}}^t
\bigg\|
\mathbb{E}_{\bar s}\Big[g(r,\bar X_r^n)\big(f(r,\bar X_r^n)-f(r,\bar X_{k_n(r)}^n)\big)\Big]
\bigg\|_{L^p(\Omega\mid\mathcal{F}_v)}dr,
\end{aligned}
\end{equation}
where we used $\mathbb{E}\big(|\mathbb{E}_sY|^p|\mathcal{F}_v\big)
\le \mathbb{E}\big(|\mathbb{E}_{\bar s}Y|^p|\mathcal{F}_v\big)$ for $v<s<\bar{s}$.

\smallskip
For $h\in L^2([0,1];\cC_b(\mathbb{R}^d))$, define the Markov operator
\begin{equation}\label{4.6}
Q_{s,t}^n h(\tau,x):=\mathbb{E}h\big(\tau,\bar X^{n}_{s,t}(x)\big), \quad \tau\in [0,1].
\end{equation}
By the Markov property, for $r>\bar s$,
\begin{equation}\label{4.7}
\begin{aligned}
&\mathbb{E}_{\bar s}\Big[g(r,\bar X_r^n)\big(f(r,\bar X_r^n)-f(r,\bar X_{k_n(r)}^n)\big)\Big]
\\
&=
\mathbb{E}_{\bar s}\big[(gf)(r,\bar X_r^n)-(gf)(r,\bar X_{k_n(r)}^n)\big]
+\mathbb{E}_{\bar s}\Big(\big[g(r,\bar X_{k_n(r)}^n)-g(r,\bar X_r^n)\big]f(r,\bar X_{k_n(r)}^n)\Big)
\\
&=
\big(Q_{\bar s,r}^n-Q_{\bar s,k_n(r)}^n\big)(gf)\big(r,\bar X_{\bar s}^n\big)
+
Q_{\bar s,k_n(r)}^n\Big(\big[g-Q^n_{k_n(r),r}g\big]f\Big)\big(r,\bar X_{\bar s}^n\big)
=:I^{1,n}_{\bar s,k_n(r),r}+I^{2,n}_{\bar s,k_n(r),r}.
\end{aligned}
\end{equation}

For $s\le t$ and $y\in\mathbb{R}^d$, we set
\begin{equation}\label{4.8}
T_{s,t}h(\tau,x):=\int_{\mathbb{R}^d} h(\tau,y)p_{\Sigma_{s,t}(y)}(y-x)dy,
\qquad
\end{equation}
where
\[
\Sigma_{s,t}(y):=\tfrac12\int_s^t a(\tau,y)d\tau,
\qquad
p_{\Sigma_{s,t}}(z):=\frac{(\det \Sigma_{s,t}^{-1})^{\frac12}}{(2\pi)^{\frac{d}{2}}}
\exp\!\Big(-\tfrac12 z^T\Sigma_{s,t}^{-1}z\Big).
\]
Then $T_{s,s}h(\tau,x)=h(\tau,x)$ and for $s<t$, $T_{s,t}h(\tau,\cdot)$ is smooth and satisfies
\begin{equation}\label{4.9}
\partial_s T_{s,t}h(\tau,x)
=-\partial^2_{x_i x_j}T_{s,t}\big(a_{ij}(s,\cdot)h(\tau,\cdot)\big)(x).
\end{equation}

Applying It\^o's formula to $s\mapsto T_{s,r}(gf)\big(r,\bar X_s^n\big)$ and using \eqref{4.9},
we obtain for $r>\bar s$,
\begin{equation}\label{4.10}
Q_{\bar s,r}^n(gf)\big(r,\bar X_{\bar s}^n\big)
=
T_{\bar s,r}(gf)\big(r,\bar X_{\bar s}^n\big)
+
\int_{\bar s}^{r}
Q_{\bar s,k_n(\tau)}^n\Big(H_{\tau,r}^n(gf)\Big)\big(r,\bar X_{\bar s}^n\big)d\tau,
\end{equation}
where
\begin{equation}\label{4.11}
H_{\tau,r}^n(gf)(r,x)
=
\int_{\mathbb{R}^d} \cK_{\tau,r}^n(x,y)(gf)(r,y)dy,
\end{equation}
and, writing $z:=y-x$,
\begin{equation}\label{4.12}
\left\{
\begin{aligned}
\cK_{\tau,r}^n(x,y)
&=
\big(a_{ij}(\tau,x)-a_{ij}(\tau,y)\big)
\Big(\big[(\Theta_{\tau,r}(x,y)z)_i(\Theta_{\tau,r}(x,y)z)_j\big]
\\[-1mm]
&\qquad\qquad\qquad -(\Theta_{\tau,r}(x,y))_{ij}\Big)
p_{\Sigma_{k_n(\tau),\tau}(x)+\Sigma_{\tau,r}(y)}(z),
\\
\Theta_{\tau,r}(x,y)
&:=
\big(\Sigma_{k_n(\tau),\tau}(x)+\Sigma_{\tau,r}(y)\big)^{-1}.
\end{aligned}
\right.
\end{equation}

By \eqref{4.7} and \eqref{4.10},
\[
\begin{aligned}
I^{1,n}_{\bar s,k_n(r),r}
&=
\big(T_{\bar s,r}-T_{\bar s,k_n(r)}\big)(gf)\big(r,\bar X_{\bar s}^n\big)
+\int_{k_n(r)}^{r} Q_{\bar s,k_n(\tau)}^n\Big(H_{\tau,r}^n(gf)\Big)\big(r,\bar X_{\bar s}^n\big)d\tau
\\
&\quad+
\int_{\bar s}^{k_n(r)}
Q_{\bar s,k_n(\tau)}^n\Big(\big(H_{\tau,r}^n-H_{\tau,k_n(r)}^n\big)(gf)\Big)\big(r,\bar X_{\bar s}^n\big)d\tau.
\end{aligned}
\]
Hence,
\begin{equation}\label{4.13}
\begin{aligned}
\|I^{1,n}_{\bar s,k_n(r),r}\|_{L^p(\Omega\mid\mathcal{F}_v)}
&\le
\big\|\big(T_{\bar s,r}-T_{\bar s,k_n(r)}\big)(gf)(r)\big\|_\infty
+\int_{k_n(r)}^{r}\big\|H_{\tau,r}^n(gf)(r)\big\|_\infty d\tau
\\
&\quad+
\int_{\bar s}^{k_n(r)}
\big\|\big(H_{\tau,r}^n-H_{\tau,k_n(r)}^n\big)(gf)(r)\big\|_\infty d\tau.
\end{aligned}
\end{equation}

We now estimate the three terms in \eqref{4.13}.
First, by \eqref{4.8},
\begin{equation}\label{4.14}
\big\|\big(T_{\bar s,r}-T_{\bar s,k_n(r)}\big)(gf)(r)\big\|_\infty
\le
\|g\|_{\infty,\infty}\|f(r)\|_0
\sup_{x\in\mathbb{R}^d}\int_{\mathbb{R}^d}
\big|p_{\Sigma_{\bar s,r}(y)}(y-x)-p_{\Sigma_{\bar s,k_n(r)}(y)}(y-x)\big|dy.
\end{equation}
To bound the kernel difference we use \cite[Proposition~2.7]{DGL}.
Since
\[
\Sigma_{\bar s,r}(y)-\Sigma_{\bar s,k_n(r)}(y)
=\tfrac12\int_{k_n(r)}^{r} a(\tau,y)d\tau,
\qquad
\|\Sigma_{\bar s,k_n(r)}(y)^{-1}\|\le C(k_n(r)-\bar s)^{-1},
\]
we obtain
\[
\big\|I-\Sigma_{\bar s,r}(y)\Sigma_{\bar s,k_n(r)}(y)^{-1}\big\|
\le C(r-k_n(r))(k_n(r)-\bar s)^{-1}.
\]
Therefore, \cite[Proposition~2.7]{DGL} yields
\[
\begin{aligned}
&\big|p_{\Sigma_{\bar s,r}(y)}(y-x)-p_{\Sigma_{\bar s,k_n(r)}(y)}(y-x)\big|
\\
&\le
C(r-k_n(r))(k_n(r)-\bar s)^{-1}
\Big(p_{\Sigma_{\bar s,r}(y)/2}(y-x)+p_{\Sigma_{\bar s,k_n(r)}(y)/2}(y-x)\Big).
\end{aligned}
\]
Since $r-k_n(r)\le n^{-1}$ and $k_n(r)-\bar s\ge 3n^{-1}$, we further have
\[
(r-k_n(r))(k_n(r)-\bar s)^{-1}
\le
(k_n(r)-\bar s)^{-\frac12}n^{-\frac12}.
\]
Inserting this into \eqref{4.14} we obtain
\begin{equation}\label{4.15}
\big\|\big(T_{\bar s,r}-T_{\bar s,k_n(r)}\big)(gf)(r)\big\|_\infty
\le
C\|f(r)\|_0\big(k_n(r)-\bar s\big)^{-\frac12}n^{-\frac12}.
\end{equation}

Next, by \eqref{4.12} and the Lipschitz regularity of $a$,
\begin{equation}\label{4.16}
\begin{aligned}
|\cK_{\tau,r}^n(x,y)|
&\le
\|\nabla a\|_{\infty,\infty}|z|
\Big(\|\Theta_{\tau,r}(x,y)\|^2|z|^2+\|\Theta_{\tau,r}(x,y)\|\Big)
p_{\Sigma_{k_n(\tau),\tau}(x)+\Sigma_{\tau,r}(y)}(z)
\\
&\le
C|z|\Big((r-k_n(\tau))^{-2}|z|^2+(r-k_n(\tau))^{-1}\Big)
p_{\Sigma_{k_n(\tau),\tau}(x)+\Sigma_{\tau,r}(y)}(z),
\end{aligned}
\end{equation}
where we used $\|\Theta_{\tau,r}(x,y)\|\le C(r-k_n(\tau))^{-1}$.
Consequently, by \eqref{4.11} and \eqref{4.16},
\begin{equation}\label{4.17}
\int_{k_n(r)}^{r}\big\|H_{\tau,r}^n(gf)(r)\big\|_\infty d\tau
\le
C\|g\|_{\infty,\infty}\|f(r)\|_0
\int_{k_n(r)}^{r}(r-k_n(\tau))^{-\frac12}d\tau
\le
C\|f(r)\|_0n^{-\frac12}.
\end{equation}
Similarly, one obtains
\begin{equation}\label{4.18}
\int_{\bar s}^{k_n(r)}
\big\|\big(H_{\tau,r}^n-H_{\tau,k_n(r)}^n\big)(gf)(r)\big\|_\infty d\tau
\le
C\|g\|_{\infty,\infty}\|\nabla a\|_{\infty,\infty}\|f(r)\|_0n^{-\frac12}.
\end{equation}
Combining \eqref{4.13}, \eqref{4.15}, \eqref{4.17} and \eqref{4.18}, we conclude that
\begin{equation}\label{4.19}
\|I^{1,n}_{\bar s,k_n(r),r}\|_{L^p(\Omega\mid\mathcal{F}_v)}
\le
C\|f(r)\|_0\big(k_n(r)-\bar s\big)^{-\frac12}n^{-\frac12}.
\end{equation}

We now control $I^{2,n}$.
By \eqref{4.6}--\eqref{4.7},
\begin{equation}\label{4.20}
\|I^{2,n}_{\bar s,k_n(r),r}\|_{L^p(\Omega\mid\mathcal{F}_v)}
\le
\|Q^n_{k_n(r),r}g(r)-g(r)\|_\infty\|f(r)\|_0.
\end{equation}
Using \eqref{4.10} with $g$ in place of $(gf)$,
\[
Q^n_{k_n(r),r}g(r,x)-g(r,x)
=
T_{k_n(r),r}g(r,x)-g(r,x)
+
\int_{k_n(r)}^{r} H_{\tau,r}^n g(r,x)d\tau
=:I^{2,n}_1(r,x)+I^{2,n}_2(r,x).
\]
Similar to \eqref{4.14}, one derives
\begin{equation*}
\|I^{2,n}_1(r)\|_\infty
\le
C\Big(\|g\|_{\infty,\infty}\|\nabla a\|_{\infty,\infty}+\|\nabla g(r)\|_\infty\Big)n^{-\frac12}
\le
C\big(1+\|\nabla g(r)\|_\infty\big)n^{-\frac12}.
\end{equation*}
Moreover, by \eqref{4.17} with $f\equiv1$,
\[
\|I^{2,n}_2(r)\|_\infty\le C\|g\|_{\infty,\infty}n^{-\frac12}.
\]
Together with \eqref{4.20} we obtain
\begin{equation}\label{4.21}
\|I^{2,n}_{\bar s,k_n(r),r}\|_{L^p(\Omega\mid\mathcal{F}_v)}
\le
C\Big(\|f(r)\|_0+\|f(r)\|_0\|\nabla g(r)\|_\infty\Big)n^{-\frac12}.
\end{equation}

Plugging \eqref{4.19} and \eqref{4.21} into \eqref{4.5} yields
\begin{equation}\label{4.22}
\begin{aligned}
\|A_{s,t}\|_{L^p(\Omega\mid\mathcal{F}_v)}
&\le
C\|f\|_{2,0,[s,t]}n^{-\frac12}
\\
&\quad+
C
\int_{k_n(s)+\frac{4}{n}}^{t}
\Big(\|f(r)\|_0\big(k_n(r)-\bar s\big)^{-\frac12}
+\|f(r)\|_0\|\nabla g(r)\|_\infty\Big)drn^{-\frac12}
\\
&\le
C\Big(
\|f\|_{2,0,[s,t]}\log(n)^{\frac12}
+\|f\|_{2,0,[s,t]}\|\nabla g\|_{2,0,[s,t]}
\Big)n^{-\frac12}.
\end{aligned}
\end{equation}

Next define the continuous process
\[
\mathcal{A}_t:=\int_0^t g(r,\bar X_r^n)\Big(f(r,\bar X_r^n)-f(r,\bar X_{k_n(r)}^n)\Big)dr,
\qquad
\delta\mathcal{A}_{s,t}:=\mathcal{A}_t-\mathcal{A}_s,
\]
and set $J_{s,t}:=\delta\mathcal{A}_{s,t}-A_{s,t}$.
Then $\mathbb{E}_s J_{s,t}=0$, and for $v<s\le t$,
\[
\|J_{s,t}\|_{L^p(\Omega\mid\mathcal{F}_v)}
\le
\|\delta\mathcal{A}_{s,t}\|_{L^p(\Omega\mid\mathcal{F}_v)}+\|A_{s,t}\|_{L^p(\Omega\mid\mathcal{F}_v)}
\le
4\|g\|_{\infty,\infty}\|f\|_{1,0,[s,t]}.
\]
Let
\[
w(s,t):=\|f\|_{1,0,[s,t]}+\|f\|_{2,0,[s,t]}^2+\|f\|_{2,0,[s,t]}\|\nabla g\|_{2,0,[s,t]},
\]
which is a continuous control on $\Delta([0,1])$.
Moreover, $\delta J_{s,u,t}=-\delta A_{s,u,t}$ and \eqref{4.22} implies
\[
\|\delta J_{s,u,t}\|_{L^p(\Omega\mid\mathcal{F}_v)}
\le
Cn^{-\frac12}\log(n)^{\frac12}w(s,t)^{\frac12}
+
Cn^{-\frac12}w(s,t).
\]
Applying Lemma~\ref{lem2.4}, we deduce that for $v<s\le t$ and $n\ge 2$
\begin{equation*}
\|J_{s,t}\|_{L^p(\Omega\mid\mathcal{F}_v)}
\le
Cn^{-\frac12}\log(n)^{\frac32}.
\end{equation*}
This, together with the definition of $J_{s,t}$, implies
\begin{equation}\label{4.23}
\|\delta\mathcal{A}_{s,t}\|_{L^p(\Omega\mid\mathcal{F}_v)}
\le
Cn^{-\frac12}\log(n)^{\frac32}.
\end{equation}
By \eqref{4.23}, for $t>\kappa_n(s)+4/n$, then
\begin{equation*}
\begin{aligned}
&\bigg\|
\int_s^t g(r,\bar X_r^n)
\big(f(r,\bar X_r^n)-f(r,\bar X_{k_n(r)}^n)\big)dr
\bigg\|_{L_p(\Omega|\mathcal F_s)}
\notag \\ &\le \bigg\|
\int_s^{\kappa_n(s)+\frac{4}{n}} g(r,\bar X_r^n)
\big(f(r,\bar X_r^n)-f(r,\bar X_{k_n(r)}^n)\big)dr
\bigg\|_{L_p(\Omega|\mathcal F_s)}\notag\\ &\quad +\bigg\|
\int_{\kappa_n(s)+\frac{4}{n}}^t g(r,\bar X_r^n)
\big(f(r,\bar X_r^n)-f(r,\bar X_{k_n(r)}^n)\big)dr
\bigg\|_{L_p(\Omega|\mathcal F_s)} \le Cn^{-\frac{1}{2}}\log(n)^{\frac32},
\end{aligned}
\end{equation*}
which gives \eqref{4.3} and hence \eqref{4.2}. The proof is complete.
\end{proof}

\smallskip
\noindent\textbf{Proof of Theorem \ref{the1.4}.}
Let $X$ and $X^n$ be defined by \eqref{1.1} and \eqref{1.4}, respectively.
Then for $t\in[0,1]$,
\begin{equation}\label{4.24}
X_t-X_t^n
=
\int_0^t\Big(b(r,X_r)-b\big(r,X^n_{k_n(r)}\big)\Big)dr
+
\int_0^t\Big(\sigma(r,X_r)-\sigma\big(r,X^n_{k_n(r)}\big)\Big)dW_r.
\end{equation}

Let $U$ be the unique strong solution to \eqref{3.1}.
Then
\begin{equation}\label{4.25}
U\in L^\infty([0,1];W^{1,\infty}(\mathbb{R}^d;\mathbb{R}^d))
\cap
L^2([0,1];W^{2,\infty}(\mathbb{R}^d;\mathbb{R}^d)),
\end{equation}
and for $\lambda$ large enough,
\begin{equation}\label{4.26}
\sup_{0\le t\le 1}\|\nabla U(t)\|_\infty\le \tfrac12.
\end{equation}

By It\^o's formula and \eqref{3.1},
\begin{equation}\label{4.27}
dU(t,X_t)
=
\lambda U(t,X_t)dt-b(t,X_t)dt+\nabla U(t,X_t)\sigma(t,X_t)dW_t,
\end{equation}
and
\begin{equation}\label{4.28}
\begin{split}
dU(t,X_t^n)
&=
\lambda U(t,X_t^n)dt-b(t,X_t^n)dt
+\nabla U(t,X_t^n)\Big(b\big(t,X^n_{k_n(t)}\big)-b(t,X_t^n)\Big)dt
\\
&\quad+\frac12\nabla^2U(t,X_t^n)\Big(a\big(t,X^n_{k_n(t)}\big)-a(t,X_t^n)\Big)dt
+\nabla U(t,X_t^n)\sigma\big(t,X^n_{k_n(t)}\big)dW_t.
\end{split}
\end{equation}
Combining \eqref{4.24}, \eqref{4.27} and \eqref{4.28} yields
\begin{equation}\label{4.29}
\begin{split}
X_t-X_t^n
&=
U(t,X_t^n)-U(t,X_t)
+\lambda\int_0^t\Big(U(r,X_r)-U(r,X_r^n)\Big)dr
\\
&\quad-\frac12\int_0^t \nabla^2U(r,X_r^n)\Big(a\big(r,X^n_{k_n(r)}\big)-a(r,X_r^n)\Big)dr
\\
&\quad+\int_0^t\big(\nabla U(r,X_r)+I\big)\Big(\sigma(r,X_r)-\sigma\big(r,X^n_{k_n(r)}\big)\Big)dW_r
\\
&\quad+\int_0^t\Big(\nabla U(r,X_r)-\nabla U(r,X_r^n)\Big)\sigma\big(r,X^n_{k_n(r)}\big)dW_r
\\
&\quad+\int_0^t \nabla U(r,X_r^n)\Big(b(r,X_r^n)-b\big(r,X^n_{k_n(r)}\big)\Big)dr.
\end{split}
\end{equation}

Fix $p\ge2$, by \eqref{4.25}, \eqref{4.26}, \eqref{4.29} and the Burkholder--Davis--Gundy inequality, we infer
\begin{equation*}
\begin{split}
&\tfrac12\Big\|\sup_{0\le r\le t}|X_r-X_r^n|\Big\|_{L^p(\Omega)}
 \\ &\le
C\int_0^t \|\nabla^2U(r)\|_\infty\|\nabla a\|_{\infty,\infty}
\|X^n_{k_n(r)}-X_r^n\|_{L^p(\Omega)}dr
\\
&\quad +C\int_0^t \|X_r-X_r^n\|_{L^p(\Omega)}dr
+C\bigg(\int_0^t \|\nabla\sigma(r)\|_\infty^2\|X_r^n-X^n_{k_n(r)}\|_{L^p(\Omega)}^2dr\bigg)^{\tfrac12}
\\
&\quad
+C\bigg(\int_0^t \big(\|\nabla\sigma(r)\|_\infty^2+\|\sigma\|_{\infty,\infty}^2
\|\nabla^2U(r)\|_\infty^2\big)\|X_r-X_r^n\|_{L^p(\Omega)}^2dr\bigg)^{\tfrac12}
\\
&\quad
+\bigg\|
\sup_{0\le r\le t}\Big|
\int_0^r \nabla U(\tau,X_\tau^n)\Big(b(\tau,X_\tau^n)-b\big(\tau,X^n_{k_n(\tau)}\big)\Big)d\tau
\Big|\bigg\|_{L^p(\Omega)}.
\end{split}
\end{equation*}

In view of \eqref{1.4},
\[
\sup_{0\le r\le 1}\|X_r^n-X^n_{k_n(r)}\|_{L^p(\Omega)}
\le
C\big(\|b\|_{2,0}+\|\sigma\|_{\infty,0}\big)n^{-\frac12}.
\]
Hence,
\begin{equation}\label{4.30}
\begin{split}
\Big\|\sup_{0\le r\le t}|X_r-X_r^n|\Big\|_{L^p(\Omega)}^2
&\le
C\int_0^t f(r)\Big\|\sup_{0\le \tau\le r}|X_\tau-X_\tau^n|\Big\|_{L^p(\Omega)}^2dr
+Cn^{-1}
\\
&\quad+
C\bigg\|
\sup_{0\le r\le t}\Big|
\int_0^r \nabla U(\tau,X_\tau^n)\Big(b(\tau,X_\tau^n)-b\big(\tau,X^n_{k_n(\tau)}\big)\Big)d\tau
\Big|
\bigg\|_{L^p(\Omega)}^2.
\end{split}
\end{equation}
where
\[
f(r)=1+\|\nabla\sigma(r)\|_\infty^2+\|\nabla^2U(r)\|_\infty^2\in L^1([0,1]).
\]

Let $Z$ be a continuous process and define
\[
\mathcal{H}(Z):=
\sup_{t\in[0,1]}
\left|
\int_0^t \nabla U(r,Z_r)\Big(b(r,Z_r)-b\big(r,Z_{k_n(r)}\big)\Big)dr
\right|.
\]
Let $\bar X^n$ be the process in \eqref{4.1} and set
\[
\rho:=
\exp\!\left(
-\int_0^1 (\sigma^{-1}b)\big(r,\bar X^n_{k_n(r)}\big)dW_r
-\tfrac12\int_0^1
\big|(\sigma^{-1}b)\big(r,\bar X^n_{k_n(r)}\big)\big|^2dr
\right).
\]
Since $\sigma^{-1}b\in L^2([0,1];L^\infty(\mathbb{R}^d;\mathbb{R}^d))$, $\rho$ is a probability density.
By Girsanov's theorem and H\"older's inequality, for every $p\ge1$,
\begin{equation}\label{4.31}
\mathbb{E}\big[\mathcal{H}(X^n)^p\big]
=
\mathbb{E}\big[\rho\mathcal{H}(\bar X^n)^p\big]
\le
\big(\mathbb{E}\mathcal{H}(\bar X^n)^{2p}\big)^{\frac12}\big(\mathbb{E}\rho^2\big)^{\frac12}.
\end{equation}
Lemma~\ref{lem4.1} implies
\begin{equation}\label{4.32}
\|\mathcal{H}(\bar X^n)\|_{L^{2p}(\Omega)}\le Cn^{-\frac12}\log(n)^{\frac32}.
\end{equation}
Moreover, by an exponential martingale estimate, one gets $\mathbb{E}\rho^2\le C$.
Combining \eqref{4.30}--\eqref{4.32}, for every integer $n\ge 2$,  we obtain
\[
\Big\|\sup_{0\le r\le t}|X_r-X_r^n|\Big\|_{L^p(\Omega)}^2
\le
C\int_0^t f(r)\Big\|\sup_{0\le \tau\le r}|X_\tau-X_\tau^n|\Big\|_{L^p(\Omega)}^2dr
+
C n^{-1}\log(n)^3.
\]
An application of Gr\"onwall's inequality yields the strong error estimate \eqref{1.7}.

\section{Numerical experiments}\label{sec5}\setcounter{equation}{0}
We illustrate the strong convergence estimate in Theorem~\ref{the1.4} by numerical experiments.
We always consider the SDE \eqref{1.1} with $s=0$.
For a step number $n\in\mathbb{N}$, let $\Delta t=1/n$, $t_k=k\Delta t$ and
$\Delta W_k=W_{t_{k+1}}-W_{t_k}$ for $k=0,1,\ldots,n-1$.
Since the diffusion is bounded in time, in our examples we choose time-independent $\sigma(x)$.
Recall the polygonal-type scheme \eqref{1.4}:
\[
dX_t^n=b\big(t,X_{\kappa_n(t)}^n\big)dt+\sigma\big(X_{\kappa_n(t)}^n\big)dW_t,
\qquad \kappa_n(t)=\lfloor nt\rfloor/n.
\]
Integrating over $[t_k,t_{k+1}]$ yields the one-step update
\begin{equation}\label{5.1}
X_{t_{k+1}}^n
=X_{t_k}^n
+\int_{t_k}^{t_{k+1}} b\big(s,X_{t_k}^n\big)ds
+\sigma\big(X_{t_k}^n\big)\Delta W_k.
\end{equation}
Since $\Delta W_k$ is Gaussian with mean $0$ and covariance $\Delta t$, it can be sampled exactly by
\[
\Delta W_k \ \stackrel{d}{=}\ (\Delta t)^{\frac12} Z_k, \qquad Z_k\sim \mathcal{N}(0,I_d)\ \text{i.i.d.}
\]

We couple all discretizations on the same driving Wiener path and use a numerical reference solution
$X^{\mathrm{ref}}$ computed with the same scheme on a very fine grid $n_{\mathrm{ref}}=2^{18}$.
The pathwise supremum is approximated by the maximum over the reference grid nodes.
Monte Carlo uses $M=5000$ independent samples.
For $p\in\{2,4\}$ we measure the errors
\[
\cE_p^{\mathrm{sup}}(n)
=
\Big\|\sup_{0\le t\le1}|X_t^{\mathrm{ref}}-X_t^n|\Big\|_{L^p(\Omega)},
\qquad
\cE_p^{\mathrm{end}}(n)
=
\|X_1^{\mathrm{ref}}-X_1^n\|_{L^p(\Omega)},
\]
for $n\in\{64,128,256,512,1024,2048,4096,8192\}$.
We report two-level ratios (local rates)
\[
\mathrm{Rate(sup)}=\log_2\frac{\cE^{\mathrm{sup}}_{p}(n)}{\cE^{\mathrm{sup}}_{p}(2n)},
\qquad
\mathrm{Rate(end)}=\log_2\frac{\cE^{\mathrm{end}}_{p}(n)}{\cE^{\mathrm{end}}_{p}(2n)},
\]
as well as least-squares (LS) slopes of $\log_2\cE$ versus $\log_2 n$.

\subsection{Example A: one-dimensional Lebesgue--Dini drift}\label{ex5.1}
Define the Dini modulus
\[
\rho(r):=\bigl(\log(e/r)\bigr)^{-3},\qquad r\in(0,1].
\]
Let the sawtooth function $\varphi$ and the constants $a_k$, $k\ge 1$,
be as in Example~\ref{ex1.3}, i.e.
\[
\varphi(\upsilon):=\min\bigl\{\mathrm{dist}(\upsilon,\mathbb{Z}),1-\mathrm{dist}(\upsilon,\mathbb{Z})\bigr\},
\quad\mathrm{dist}(\upsilon,\mathbb{Z})=\inf_{m\in\mathbb{Z}}|\upsilon-m|, \quad
a_k:=\rho(2^{-k})-\rho(2^{-(k+1)}).
\]
Recall that $\varphi$ is bounded, piecewise linear,
and $1$-Lipschitz. The superposition of rescaled profiles $\varphi(2^k x)$ produces
multi-scale spatial oscillations, while the weights $a_k$ control their amplitude,
yielding a Dini-type modulus of continuity.

\smallskip
We consider the one-dimensional SDE
\begin{equation}\label{5.2}
dX_t=b(t,X_t)dt+\sigma(X_t)dW_t,\qquad X_0=0,
\end{equation}
with coefficients
\begin{equation}\label{5.3}
b(t,x)=f(t)g_{800}(x),\quad
\sigma(x)=1+0.5\tanh(x).
\end{equation}
where
\[
f(t)=t^{-\frac12}\bigl(\log(e/t)\bigr)^{-1},\quad
g_K(x)=\sum_{k=1}^{K} a_k\varphi(2^k x).
\]

The time factor $f(t)$ is critical at the $L^2$-threshold
up to a logarithmic correction and
\[
\int_0^1 |f(t)|^2\,dt=\int_0^1 \frac{dt}{t(\log(e/t))^2}<\infty.
\]
Thus,
\[
f\in L^2(0,1) \quad {\rm and} \quad f\notin L^q(0,1), \ \forall \  q>2.
\]

\smallskip
In particular, the drift employs the truncation level $K=800$, whereas a smaller value
$K=10$ is displayed in Figure~\ref{fig1} for illustration.
\begin{figure}[H]
  \centering
  \begin{subfigure}[H]{0.45\textwidth}
    \centering
    \includegraphics[width=\linewidth]{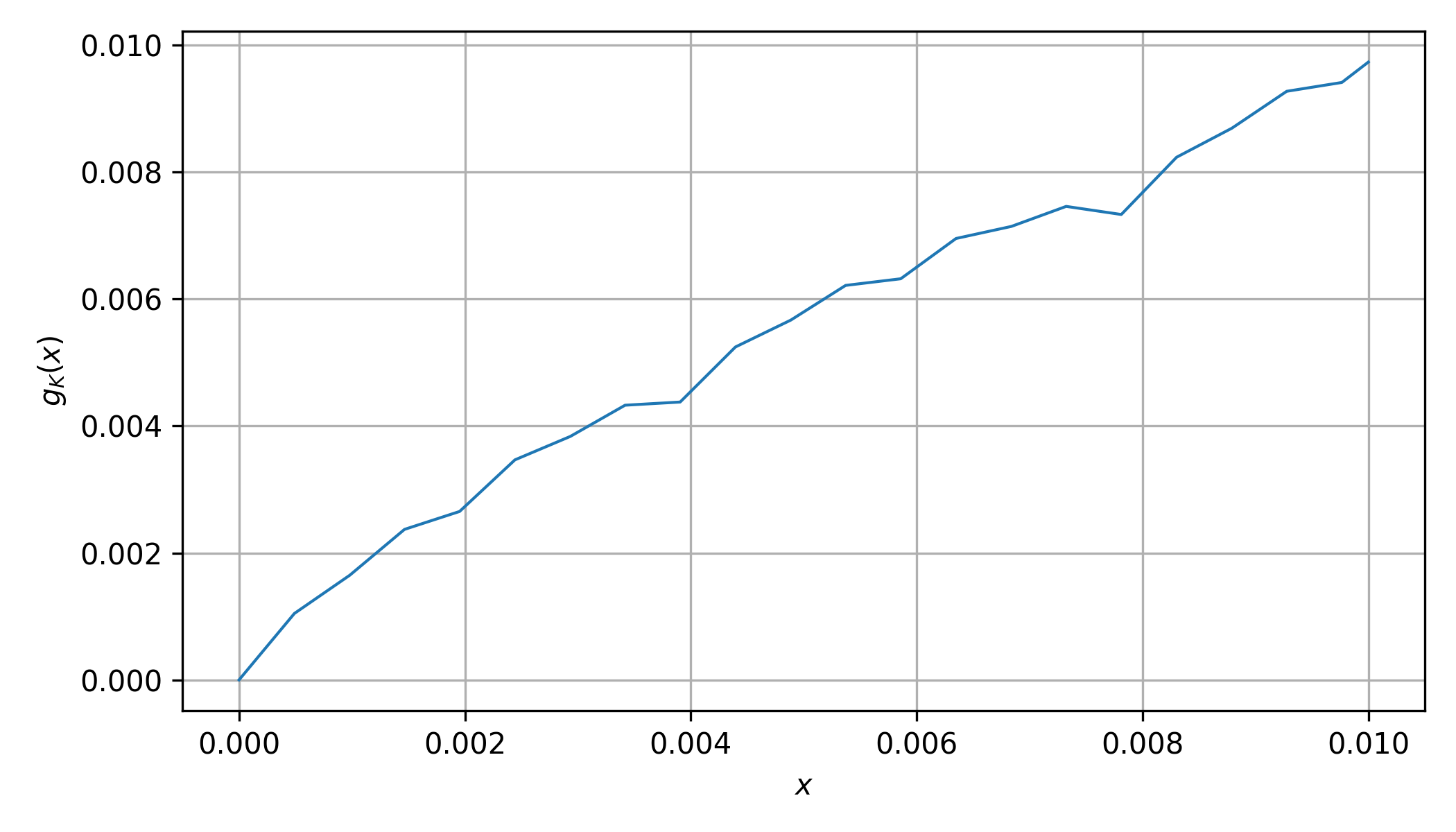}
    \caption{$K=10$}
  \end{subfigure}\hfill
 \begin{subfigure}[H]{0.45\textwidth}
   \centering
    \includegraphics[width=\linewidth]{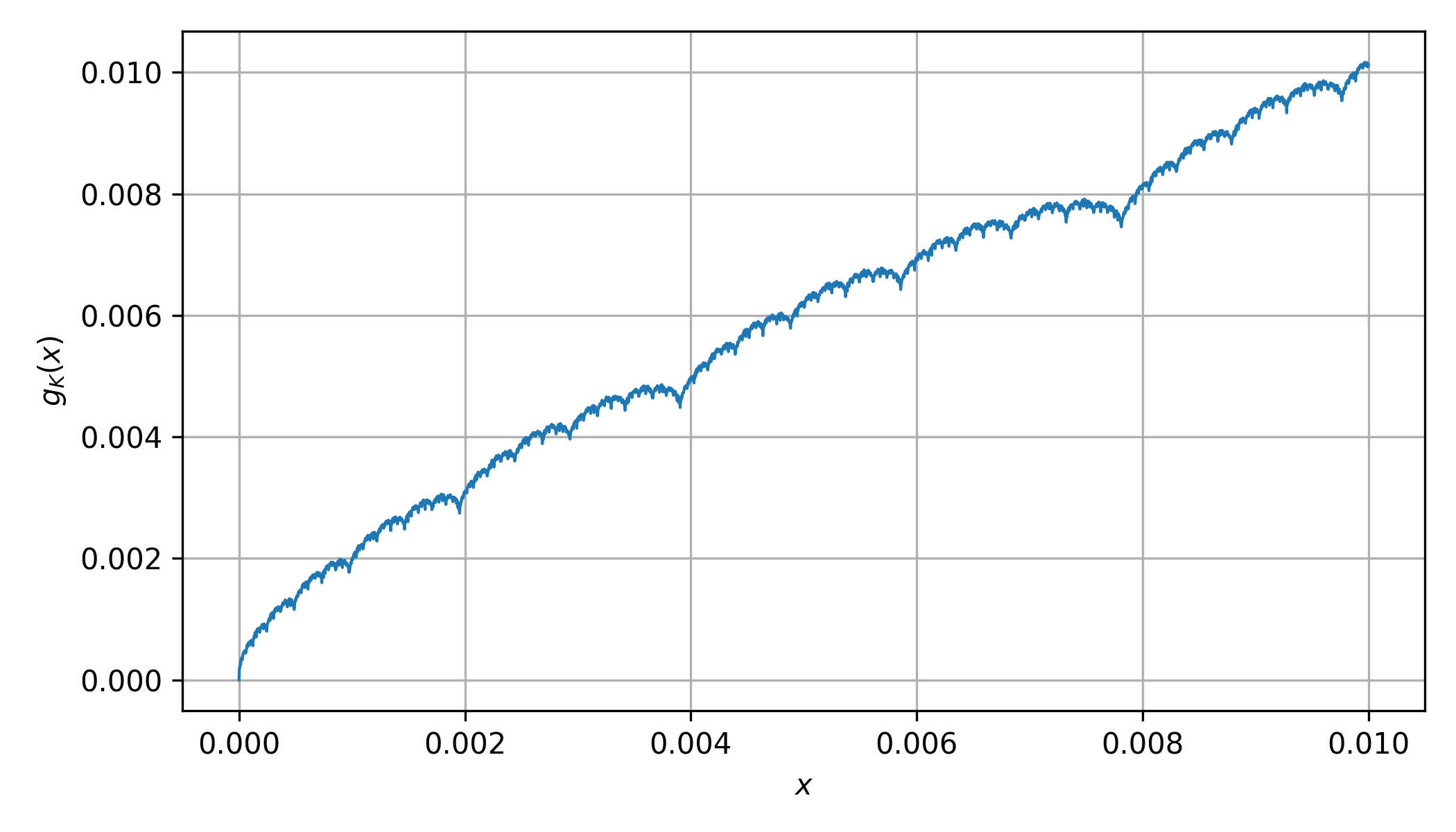}
    \caption{$K=800$}
 \end{subfigure}
  \caption{Spatial profiles of $g_K(x)=\sum_{k=1}^{K} a_k\varphi(2^k x)$ for different truncation levels $K$.}
  \label{fig1}
\end{figure}

Due to the truncation at $K=800$, the function $g_{800}$ in \eqref{5.3}
is globally Lipschitz (hence H\"older continuous of any order $\alpha\in(0,1]$),
with a Lipschitz constant bounded by $\sum_{k=1}^{800} 2^k a_k$.
While this truncation removes the genuine ``non-H\"older'' character of the
infinite series, the modulus of continuity on intermediate scales is still well
captured by the Dini-type modulus $\rho$, and the large effective Lipschitz
constant reflects pronounced multi-scale oscillations. Hence \eqref{5.2} serves
as a stringent numerical test in a low-regularity drift regime.

\smallskip
Since the drift is time-separable, $b(t,x)=f(t)g_{800}(x)$, the drift contribution
in the polygonal recursion \eqref{5.1} reduces to a deterministic weight
\begin{equation}\label{5.4}
\int_{t_k}^{t_{k+1}} b\bigl(s,X^n_{t_k}\bigr)ds
=\Bigl(\int_{t_k}^{t_{k+1}} f(s)\,ds\Bigr) g_{800}\bigl(X^n_{t_k}\bigr)
=: w_k\, g_{800}\bigl(X^n_{t_k}\bigr),
\end{equation}
where $w_k:=\int_{t_k}^{t_{k+1}} f(s)\,ds$
are computed once to high precision and reused across all Monte Carlo samples.
This avoids an additional time-quadrature bias induced by the borderline singularity
and ensures that the observed strong errors primarily reflect discretization effects
under low spatial regularity.

\begin{figure}[H]
  \centering
  \begin{subfigure}[h]{0.45\textwidth}
    \centering
    \includegraphics[width=\linewidth]{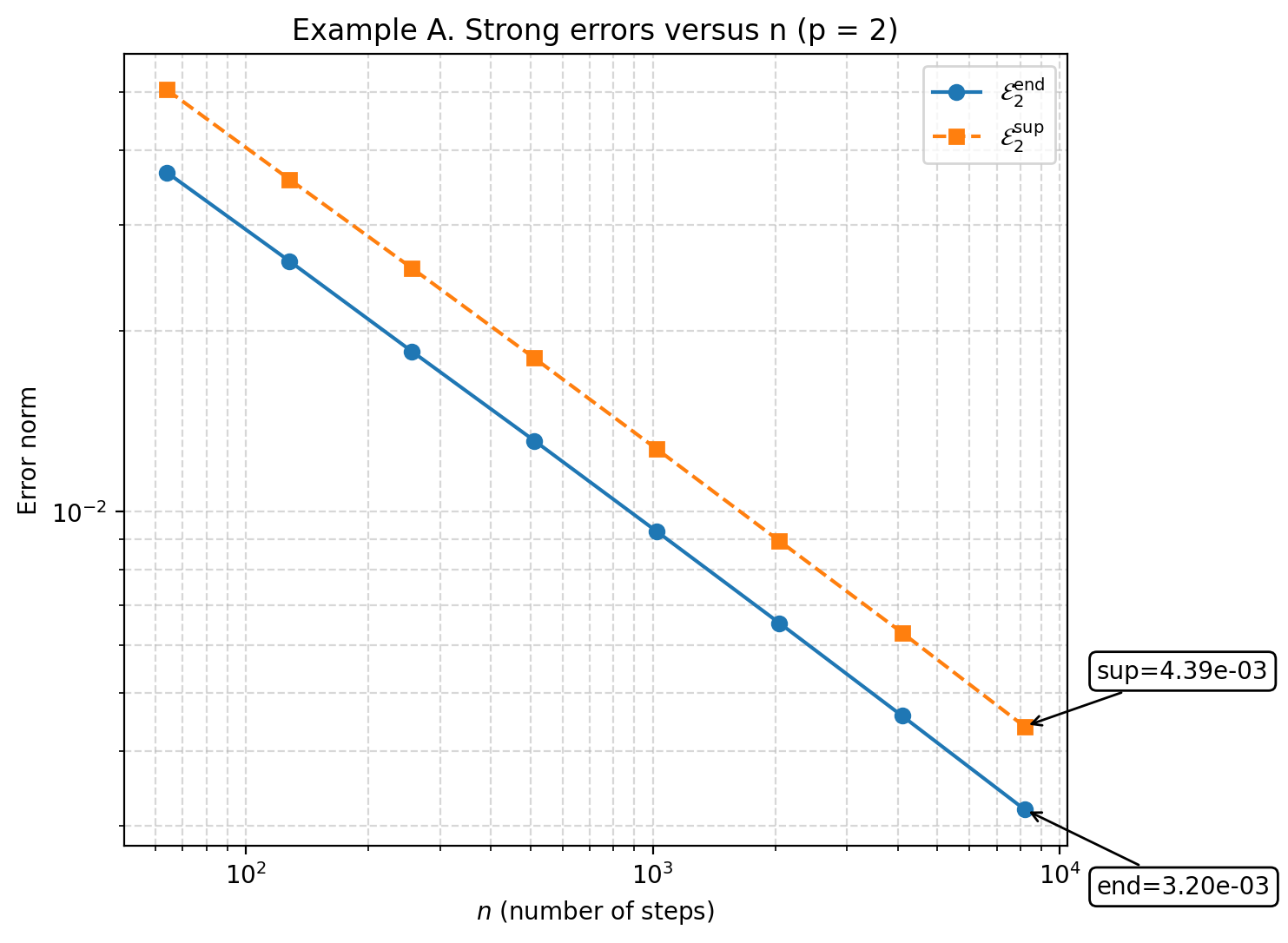}
    \caption{$L^2$}
  \end{subfigure}\hfill
  \begin{subfigure}[h]{0.45\textwidth}
    \centering
    \includegraphics[width=\linewidth]{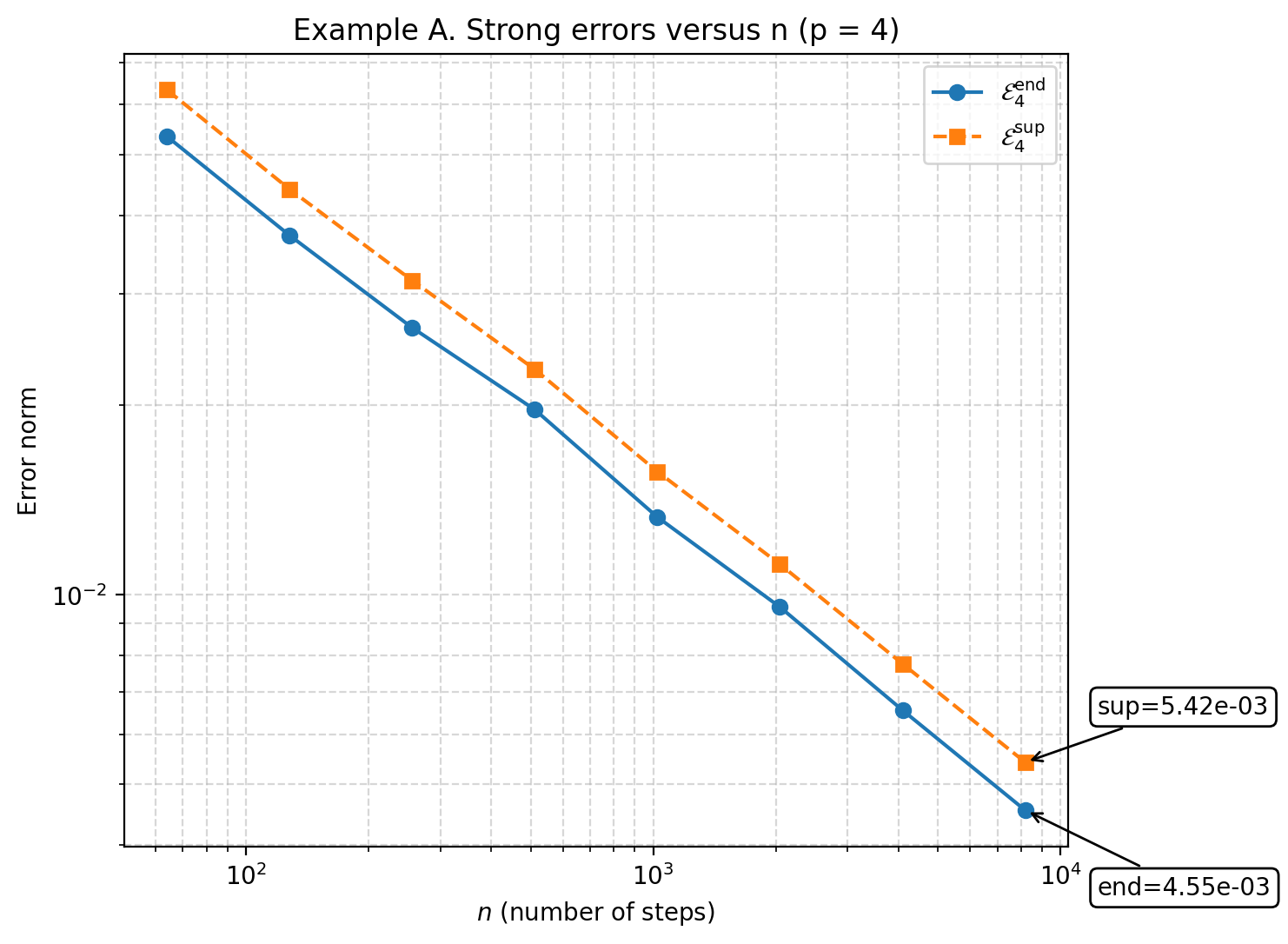}
    \caption{$L^4$}
  \end{subfigure}
  \caption{
  Example~A. Log--log plots of strong errors versus $n$
  (endpoint and pathwise supremum).}
   \label{fig:A}
\caption*{\footnotesize\textbf{Note.}
Log--log plots of the strong errors for \eqref{5.2} as functions of the time step
number $n$, including endpoint and pathwise supremum errors for $p=2$ (left)
and $p=4$ (right).
All discretizations are compared with the numerical reference solution.
The observed slopes are close to $1/2$.}
\end{figure}

\begin{table}[H]
\centering
\caption{Example~A. Strong errors and experimental rates (merged $L^2$ and $L^4$; common $n$)}.
\label{tab:A}
\small
\setlength{\tabcolsep}{5pt}
\resizebox{\linewidth}{!}{%
\begin{tabular}{ccccccccc}
\toprule
$n$ & \multicolumn{4}{c}{$L^2$} & \multicolumn{4}{c}{$L^4$}\\
\cmidrule(lr){2-5}\cmidrule(lr){6-9}
 & $\cE^{\text{end}}_2$ & $\cE^{\text{sup}}_2$ & Rate(end) & Rate(sup)
 & $\cE^{\text{end}}_4$ & $\cE^{\text{sup}}_4$ & Rate(end) & Rate(sup)\\
\midrule
64   & $3.67\times10^{-2}$ & $5.04\times10^{-2}$ & ---  & ---  &
        $5.34\times10^{-2}$ & $6.33\times10^{-2}$ & ---  & ---  \\
128  & $2.61\times10^{-2}$ & $3.58\times10^{-2}$ & $0.49$ & $0.49$ &
        $3.72\times10^{-2}$ & $4.40\times10^{-2}$ & $0.52$ & $0.52$ \\
256  & $1.85\times10^{-2}$ & $2.54\times10^{-2}$ & $0.50$ & $0.49$ &
        $2.65\times10^{-2}$ & $3.15\times10^{-2}$ & $0.49$ & $0.48$ \\
512  & $1.31\times10^{-2}$ & $1.80\times10^{-2}$ & $0.49$ & $0.50$ &
        $1.97\times10^{-2}$ & $2.28\times10^{-2}$ & $0.43$ & $0.47$ \\
1024 & $9.27\times10^{-3}$ & $1.27\times10^{-2}$ & $0.50$ & $0.50$ &
        $1.33\times10^{-2}$ & $1.57\times10^{-2}$ & $0.57$ & $0.54$ \\
2048 & $6.53\times10^{-3}$ & $8.94\times10^{-3}$ & $0.51$ & $0.51$ &
        $9.55\times10^{-3}$ & $1.12\times10^{-2}$ & $0.47$ & $0.49$ \\
4096 & $4.58\times10^{-3}$ & $6.29\times10^{-3}$ & $0.51$ & $0.51$ &
        $6.55\times10^{-3}$ & $7.76\times10^{-3}$ & $0.54$ & $0.52$ \\
8192 & $3.20\times10^{-3}$ & $4.39\times10^{-3}$ & $0.52$ & $0.52$ &
        $4.55\times10^{-3}$ & $5.42\times10^{-3}$ & $0.53$ & $0.52$ \\
\midrule
\multicolumn{3}{r}{LS slope ($L^2$):} & $0.51$ & $0.51$
& \multicolumn{2}{r}{LS slope ($L^4$):} & $0.52$ & $0.51$\\
\bottomrule
\end{tabular}}
\caption*{\footnotesize\textbf{Note.} Strong endpoint errors and pathwise supremum errors for the one-dimensional example
under different time resolutions $n$.
Two-level ratios provide empirical convergence rates typically close to $1/2$ in both $L^2$ and $L^4$ norms.
LS slopes computed over the finest levels summarize the overall strong convergence behavior.}
\end{table}

\subsection{Example B: two-dimensional coupled drift and diffusion}\label{subsec5.2}
We set $d=2$ and consider the two-dimensional SDE
\begin{equation}\label{5.5}
dX_t=b(t,X_t)\,dt+\sigma(X_t)\,dW_t,\qquad X_0=(0,0),
\end{equation}
where $W_t$ is a  two-dimensional standard Wiener process. We keep the functions $f$ and $g$ from Example~A and define the coupled drift
\[
b(t,x)= f(t)\,G(x)+H(x),
\]
where $x=(x_1,x_2)\in\mathbb{R}^2$ and
\[
G(x)=
\begin{pmatrix}
g_{800}(x_1+0.35\,x_2)\\[1mm]
g_{800}(x_2-0.25\,x_1)
\end{pmatrix}.
\]
Each component of $G$ depends on both spatial variables, yielding a genuinely
two-dimensional coupling.
The additional bounded nonlinear term $H$ is given by
\[
H(x)=
\begin{pmatrix}
0.25\tanh(x_2)+0.1\psi(x_1-x_2)\\[1mm]
\psi(x_2)+0.12\tanh(x_1)+0.08\psi(x_1+x_2)
\end{pmatrix}
\quad {\rm and} \quad
\psi(z)=\mathrm{sign}(z)\frac{|z|^{0.4}}{1+|z|^{0.4}}.
\]

We further choose a non-diagonal diffusion coefficient of the form
\[
\sigma(x)= I + \varepsilon S(x), \qquad \varepsilon=0.25,
\]
with
\[
S(x)=
\begin{pmatrix}
\tanh(x_1) & 0.3\tanh(x_1+x_2)\\
0.3\tanh(x_1+x_2) & \tanh(x_2)
\end{pmatrix}.
\]
This yields a bounded, symmetric, and non-diagonal diffusion matrix,
ensuring genuine noise coupling while preserving uniform ellipticity.

\smallskip
In the polygonal recursion~\eqref{5.1}, the time-singular components of the drift
are integrated exactly via the precomputed weights
$w_k=\int_{t_k}^{t_{k+1}} f(s)ds$ as in~\eqref{5.4}, while the remaining drift
terms are evaluated explicitly at the left endpoints.

\begin{figure}[H]
\centering
\begin{subfigure}[H]{0.45\textwidth}
  \centering
  \includegraphics[width=\linewidth]{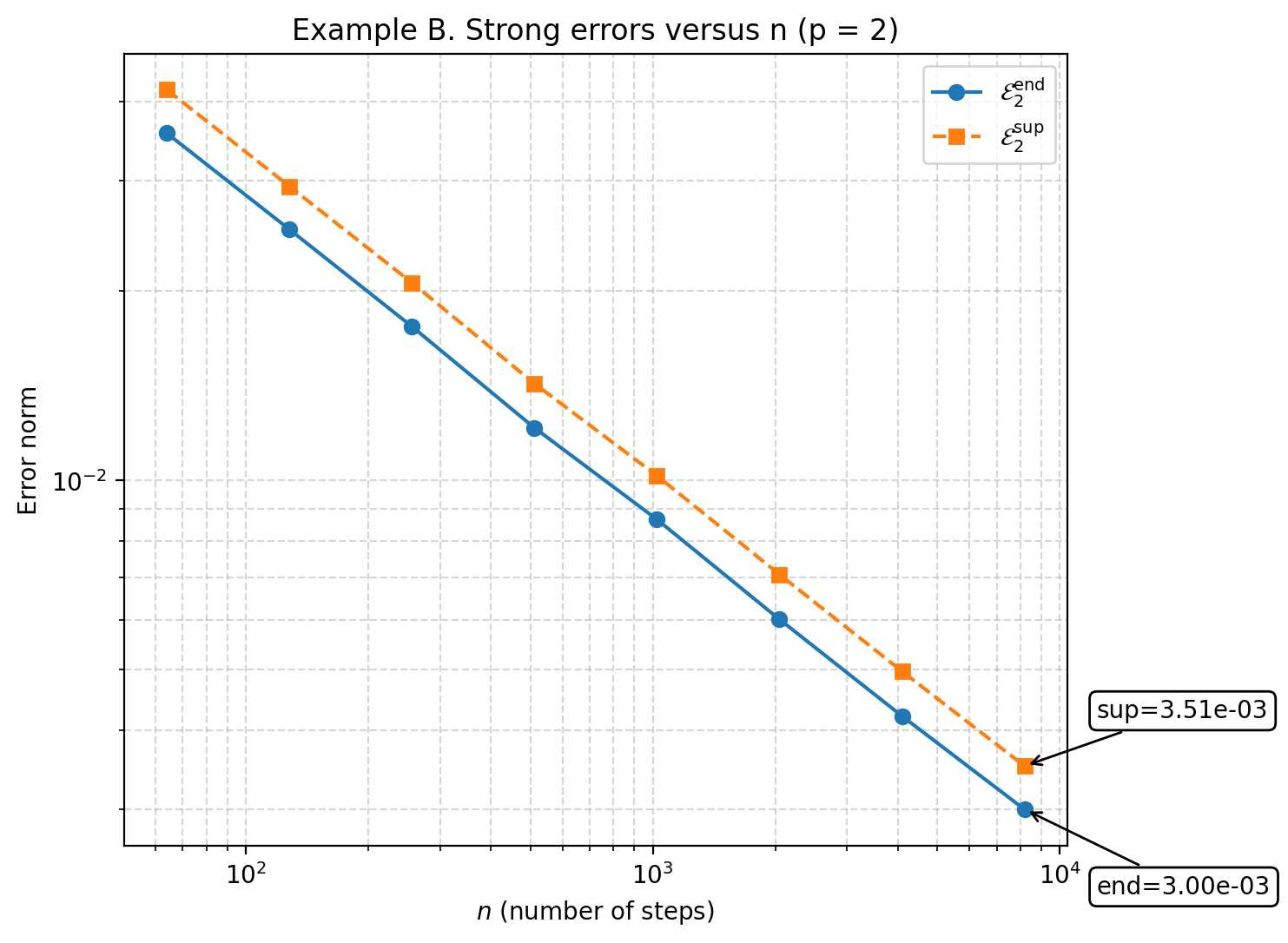}
 \caption{$L^2$}
\end{subfigure}\hfill
\begin{subfigure}[H]{0.45\textwidth}
  \centering
  \includegraphics[width=\linewidth]{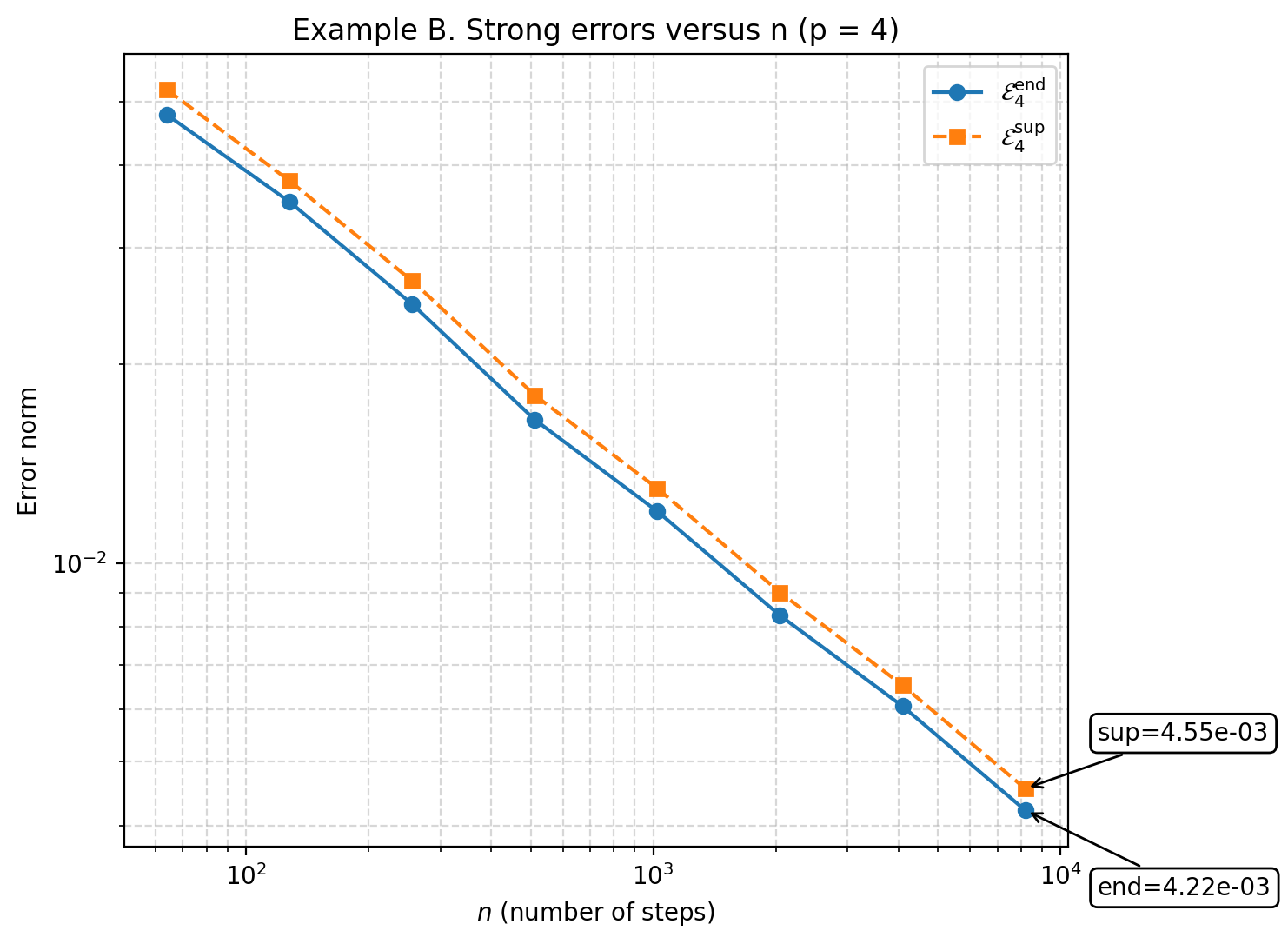}
  \caption{$L^4$}
\end{subfigure}
\caption{Example~B. Log--log plot of strong errors versus $n$
(endpoint and supremum).}
\label{fig:B}
\caption*{\footnotesize\textbf{Note.}
Log--log plots of the strong errors for the two-dimensional coupled example
as functions of the time step number $n$, including endpoint and pathwise
supremum errors for $p=2$ (left) and $p=4$ (right).
All discretizations are compared with the numerical reference solution.
The slopes indicate a strong convergence rate close to $1/2$.}
\end{figure}

\begin{table}[H]
\centering
\caption{Example~B. Strong errors and experimental rates (merged $L^2$ and $L^4$; common $n$).}
\label{tab:B}
\small
\setlength{\tabcolsep}{5pt}
\resizebox{\linewidth}{!}{%
\begin{tabular}{ccccccccc}
\toprule
$n$ & \multicolumn{4}{c}{$L^2$} & \multicolumn{4}{c}{$L^4$}\\
\cmidrule(lr){2-5}\cmidrule(lr){6-9}
 & $\cE^{\text{end}}_2$ & $\cE^{\text{sup}}_2$ & Rate(end) & Rate(sup) & $\cE^{\text{end}}_4$ & $\cE^{\text{sup}}_4$ & Rate(end) & Rate(sup)\\
\midrule
64 & $3.57\times10^{-2}$ & $4.18\times10^{-2}$ & --- & --- & $4.77\times10^{-2}$ & $5.21\times10^{-2}$ & --- & --- \\
128 & $2.51\times10^{-2}$ & $2.94\times10^{-2}$ & 0.51 & 0.51 & $3.52\times10^{-2}$ & $3.79\times10^{-2}$ & 0.44 & 0.46 \\
256 & $1.76\times10^{-2}$ & $2.06\times10^{-2}$ & 0.51 & 0.51 & $2.47\times10^{-2}$ & $2.68\times10^{-2}$ & 0.51 & 0.50 \\
512 & $1.21\times10^{-2}$ & $1.43\times10^{-2}$ & 0.54 & 0.53 & $1.65\times10^{-2}$ & $1.80\times10^{-2}$ & 0.58 & 0.58 \\
1024 & $8.67\times10^{-3}$ & $1.02\times10^{-2}$ & 0.48 & 0.49 & $1.20\times10^{-2}$ & $1.30\times10^{-2}$ & 0.46 & 0.47 \\
2048 & $6.01\times10^{-3}$ & $7.09\times10^{-3}$ & 0.53 & 0.52 & $8.33\times10^{-3}$ & $9.03\times10^{-3}$ & 0.52 & 0.52 \\
4096 & $4.22\times10^{-3}$ & $4.97\times10^{-3}$ & 0.51 & 0.51 & $6.07\times10^{-3}$ & $6.53\times10^{-3}$ & 0.46 & 0.47 \\
8192 & $3.00\times10^{-3}$ & $3.51\times10^{-3}$ & 0.49 & 0.50 & $4.22\times10^{-3}$ & $4.55\times10^{-3}$ & 0.52 & 0.52 \\
\midrule
\multicolumn{3}{r}{LS slope ($L^2$):} & $0.51$ & $0.51$
& \multicolumn{2}{r}{LS slope ($L^4$):} & $0.50$ & $0.50$\\
\bottomrule
\end{tabular}}
\caption*{\footnotesize\textbf{Note.}
Strong endpoint errors and pathwise supremum errors for the two-dimensional coupled example
under different time resolutions $n$.
Two-level ratios provide empirical convergence rates typically close to $1/2$ in both $L^2$ and $L^4$ norms.
LS slopes computed over the finest levels summarize the overall strong convergence behavior.}
\end{table}

\noindent\textbf{Conclusion of the numerical experiments.}
The numerical experiments in Examples~A and~B illustrate the strong convergence
behavior of the polygonal Euler--Maruyama scheme under low-regularity drift
assumptions.
In both cases, the endpoint and pathwise supremum errors decay approximately
linearly on the log--log scale as the time step is refined, with empirical
convergence rates close to $1/2$, in agreement with the estimate of
Theorem~\ref{the1.4}.

\smallskip
Example~A demonstrates this behavior in a one-dimensional setting with a
borderline time-singular drift that remains square-integrable on $(0,1)$ and is
only Dini continuous in space.
Example~B extends the observation to a genuinely coupled two-dimensional system
with non-diagonal diffusion coefficients.
Overall, the numerical results provide supporting evidence for the robustness of
the proposed scheme in the presence of Lebesgue--Dini type drifts and uniformly
elliptic, non-constant diffusion coefficients.

\section*{Acknowledgements} Guangying Lv was partially supported by the National Natural Science Foundation of China grant 12171247. Junhao Hu was partially supported by the National Natural Science Foundation of China grant 62373383 and the Fundamental Research Funds for the Central Universities of South-Central Minzu University grants KTZ20051 and CZT20020.

\section*{Conflicts of Interest} The authors declare that they have no competing interests.

\section*{Data Availability Statements} Data sharing is not applicable to this article as no new data were created or analyzed in this study.

\end{document}